\let\pa\partial  
\let\na\nabla  
\let\eps\varepsilon  
\newcommand{\N}{{\mathbb N}}  
\newcommand{\R}{{\mathbb R}} 
\newcommand{\diver}{\operatorname{div}} 
\newcommand{\dom}{\mathscr{D}}
\newcommand{\ctot}{c_{\rm tot}}
\newcommand{\en}{{\mathcal F}}
\newcommand{\hc}{\widehat{c}}
\newcommand{\A}{{\mathcal A}}
\newtheorem{theorem}{Theorem}   
\newtheorem{lemma}[theorem]{Lemma}
\newtheorem{corollary}[theorem]{Corollary}
\begin{document}  

\title[Analysis of a single-phase flow mixture model]{Existence analysis of a 
single-phase flow mixture model with van der Waals pressure}

\author{Ansgar J\"ungel}
\address{A.J.: Institute for Analysis and Scientific Computing, Vienna University of  
	Technology, Wiedner Hauptstra\ss e 8--10, 1040 Wien, Austria}
\email{juengel@tuwien.ac.at} 

\author{Ji\v{r}\'\i\ Miky\v{s}ka}
\address{J.M.: Department of Mathematics, Faculty of Nuclear Sciences and 
Physical Engineering,
Czech Technical University in Prague, Trojanova 13, 12000 Prague 2, Czech Republic}
\email{jiri.mikyska@fjfi.cvut.cz}

\author{Nicola Zamponi}
\address{N.Z.: Institute for Analysis and Scientific Computing, Vienna University of  
	Technology, Wiedner Hauptstra\ss e 8--10, 1040 Wien, Austria}
\email{nicola.zamponi@tuwien.ac.at}

\date{\today}

\thanks{The authors have been partially supported by the bilateral Czech-Austrian
project CZ 10/2015. The first and last authors acknowledge partial support from   
the Austrian Science Fund (FWF), grants P22108, P24304, and W1245.
The second author acknowledges support from the Student Grant Agency of
the Czech Technical University in Prague, project no.\
SGS14/206/OHK4/3T/14} 

\begin{abstract}
The transport of single-phase fluid mixtures in porous media is described by
cross-diffusion equations for the mass densities. The equations
are obtained in a thermodynamic consistent way
from mass balance, Darcy's law, and the van der Waals equation
of state for mixtures. The model consists of parabolic equations with cross diffusion
with a hypocoercive diffusion operator. The global-in-time
existence of weak solutions in a bounded domain with equilibrium
boundary conditions is proved, extending the boundedness-by-entropy method.
Based on the free energy inequality, the large-time convergence of the
solution to the constant equilibrium mass density is shown.  
For the two-species model and specific diffusion matrices, an integral
inequality is proved, which reveals a minimum principle 
for the mass fractions. Without mass diffusion, 
the two-dimensional pressure is shown to converge exponentially 
fast to a constant. Numerical examples in one space dimension
illustrate this convergence.
\end{abstract}

\keywords{Cross diffusion, single-phase flow, van der Waals pressure, existence of
weak solutions, large-time asymptotics, maximum principle.}  
 
\subjclass[2000]{35K51, 76S05}  

\maketitle


\section{Introduction}

The transport of fluid mixtures in porous media has many important industrial
applications like oil and gas extraction, dispersion of contaminants in underground
water reservoirs, nuclear waste storage, 
and carbon sequestration. Although there are many papers on the modeling and numerical
solution of such compositional models \cite{ADF85,CHM06,CQE00,HoFi05,OlPa07,PoMi15}, 
there are no results on their mathematical analysis. In this paper, we provide
an existence analysis for a single-phase compositional model with van der Waals
pressure in an isothermal setting. From a mathematical viewpoint,  
the model consists of strongly coupled degenerate parabolic equations for the 
mass densities. The cross-diffusion coupling and the 
hypocoercive diffusion operator constitute the
main difficulty of the analysis.

Our analysis is a continuation of the program of the first and
third author to develop a theory for cross-diffusion equations possessing
an entropy (here: free energy) structure \cite{Jue15,ZaJu16}. 
The mathematical novelties are the complex 
structure of the equations and the observation 
that the solution of the binary model, for specific
diffusion matrices, satisfies an unexpected integral inequality giving rise
to a minimum principle, which generally does not hold for strongly coupled
diffusion systems.

\subsection*{Model equations}

More specifically, we consider an isothermal fluid mixture of $n$ mass densities 
$c_i(x,t)$ in a domain $\Omega\subset\R^d$ ($d\le 3$),
whose evolution is governed by the transport equations
\begin{equation}\label{1.c}
  \pa_t c_i = \diver\bigg(c_i\na p + \eps\sum_{j=1}^n D_{ij}(c)
	\na\mu_j\bigg), \quad x\in\Omega,\ t>0,\ i=1,\ldots,n,
\end{equation}
where $c=(c_1,\ldots,c_n)$. The van der Waals pressure $p=p(c)$ and the chemical 
potentials $\mu_1=\mu_1(c),\ldots,\mu_n=\mu_n(c)$ are given by
\begin{align}
  p &= \frac{\ctot}{1-\sum_{j=1}^n b_jc_j} - \sum_{i,j=1}^n a_{ij}c_ic_j, 
	\label{1.p} \\
	\mu_i &= \log c_i - \log\bigg(1-\sum_{j=1}^n b_jc_j\bigg) 
	+ \frac{b_i\ctot}{1-\sum_{j=1}^n b_jc_j} - 2\sum_{j=1}^n a_{ij}c_j.
	\label{1.mu}
\end{align}
These expressions are well defined if $(c_1(x,t),\ldots,c_n(x,t))\in\dom$ a.e., where
\begin{equation}\label{1.dom}
  \dom = \bigg\{(c_1,\ldots,c_n)\in\R^n:c_i>0\mbox{ for }i=1,\ldots,n,\ 
	\sum_{j=1}^n b_jc_j < 1\bigg\}.
\end{equation}
Here, $\ctot=\sum_{i=1}^n c_i$
is the total mass density and $\eps>0$ is a (small) parameter. The parameter
$a_{ij}=a_{ji}>0$ measures the attraction between the $i$th and $j$th
species, and $b_j>0$ is a measure of the size of the molecules. 
The diffusion matrix $D(c)=(D_{ij}(c))$ is assumed to be symmetric and positive 
semidefinite. Moreover, we suppose that the following bound holds:
\begin{equation}\label{1.D0}
  D_0|\Pi v|^2\le \sum_{i,j=1}^n D_{ij}(c)v_i v_j \le D_1|\Pi v|^2 
	\quad\mbox{for all }v\in\R^n,\ c\in\dom,
\end{equation}
for some $D_0$, $D_1>0$, where $\Pi = I - \ell\otimes\ell$ is the projection on 
the subspace of $\R^{n}$ orthogonal to $\ell := (1,\ldots,1)/\sqrt{n}$. 
A property like \eqref{1.D0} is known in the literature as {\em hypocoercivity}, 
that is, coercivity on a subspace of the considered vector space. In our case, 
the matrix $D(c)$ in \eqref{1.D0} is coercive on the orthogonal complement of 
the subspace generated by $\ell$. Bound \eqref{1.D0} is justified in the 
derivation of model \eqref{1.c}-\eqref{1.mu},
as the diffusion fluxes $J_{i} = -\eps\sum_{j=1}^{n}D_{ij}\nabla\mu_{j}$ 
must sum up to zero (see Section~\ref{sec.model}).

Equation \eqref{1.p} is the van der Waals
equation of state for mixtures, taking into account the finite size of the
molecules. Equations \eqref{1.p}-\eqref{1.mu} are derived from the 
Helmholtz free energy $\en(c)$ of the mixture; see \eqref{2.H} below. 
For details of the
modeling and the underlying assumptions, we refer to Section \ref{sec.model}. 

We impose the boundary and initial conditions
\begin{equation}\label{1.bic}
  \mu_i=0\quad\mbox{on }\pa\Omega,\ t>0, \quad c_i(\cdot,0)=c_i^0\quad\mbox{in }
	\Omega,\ i=1,\ldots,n.
\end{equation}
Note that we choose equilibrium boundary conditions. A physically more realistic
choice would be to assume that the reservoir boundary is impermeable, leading
to no-flux boundary conditions. However, conditions \eqref{1.bic} are needed
to obtain Sobolev estimates, together with the energy inequality \eqref{1.ei} below.
Numerical examples for homogeneous Neumann boundary conditions for the pressure
in case $\eps=0$ are presented in Section \ref{sec.num}.

Up to our knowledge,
there are no analytical results for system \eqref{1.c}-\eqref{1.mu} and \eqref{1.bic}. 
In the literature, Euler and Navier-Stokes models were considered with
van der Waals pressure. For instance, the existence of global classical 
solutions to the corresponding Euler equations with small initial data 
was shown in \cite{Lec11}. The existence of
traveling waves in one-dimensional Navier-Stokes with capillarity was studied
in \cite{THHC14}. Furthermore, in \cite{GMWZ06} the existence and stability of
shock fronts in the vanishing viscosity limit for Navier-Stokes equations
with van der Waals type equations of state was established.

\subsection*{Main difficulties}

A straightforward computation shows that the Gibbs-Duhem relation
$\na p=\sum_{i=1}^n c_i\na\mu_i$ holds. Therefore, \eqref{1.c} can be written as
\begin{equation}\label{1.c2}
  \pa_t c_i = \diver\sum_{j=1}^n\big((c_ic_j+\eps D_{ij}(c))\na\mu_j\big), 
	\quad i=1,\ldots,n,
\end{equation}
which is a cross-diffusion system in the so-called entropy variables $\mu_i$
\cite{Jue15}.
The matrix $(c_ic_j)\in\R^{n\times n}$ is of rank one with two eigenvalues,
a positive one and the other one equal to zero (with algebraic multiplicity $n-1$). 
Thus, if $\eps=0$, system \eqref{1.c} is not parabolic
in the sense of Petrovski \cite{Ama93}, and an existence theory for such
diffusion systems is highly nontrivial, which is the {\em first difficulty}.
The property on the eigenvalues is reflected in the 
energy estimate. Indeed, a formal computation, made rigorous below, shows that
\begin{equation}\label{1.ei}
  \frac{d}{dt}\int_\Omega \en(c)dx + \int_\Omega|\na p|^2 dx
	+ \eps \int_\Omega \na\mu : D(c)\na\mu\ dx \le 0.
\end{equation}
In case $\eps=0$ we obtain only one gradient estimate for $p$ which
is not sufficient for the analysis. There exist some results for so-called
strongly degenerate parabolic equations (for which the diffusion matrix vanishes
in some subset of positive $d$-dimensional measure) \cite{BBKT03}.
However, the techniques cannot be applied to the present problem.
Therefore, we need to assume that $\eps>0$. Then the gradient
estimates for $\Pi\mu$ and $p$ together with the boundary conditions \eqref{1.bic}
yield uniform $H^1$ bounds, which are the basis of the existence proof.
The behavior of the solutions for $\eps=0$ are studied numerically in Section
\ref{sec.num}. 

The {\em second difficulty} is the invertibility of the relation between $c$ and $\mu$,
i.e.\ to define for given $\mu$ the mass density vector $c=\Phi^{-1}(\mu)$,
where $\mu=(\mu_1,\ldots,\mu_n)$ and $\Phi:\dom\to\R^n$ is defined by \eqref{1.mu}.
A key ingredient for the proof
is the positive definiteness of the Hessian $\en''$ of the
free energy since $\pa\Phi_i/\pa c_j=\pa^2\en/\pa c_i\pa c_j$. This is
only possible under a smallness condition on the eigenvalues of $(a_{ij})$;
see Lemma \ref{lem.Hess}. This condition is not surprising since it just
means that phase separation is prohibited. The analysis of multiphase flows
requires completely different mathematical techniques;
see, e.g., \cite{Zha13} for phase transitions in Euler equations with
van der Waals pressure.

The {\em third difficulty} is the proof of $c(x,t)\in\dom$ a.e. This property 
is needed to define $p$ and $\mu_i$ through \eqref{1.p}-\eqref{1.mu},
but generally a maximum principle cannot be applied to the strongly coupled
system \eqref{1.c}. The idea is to employ the boundedness-by-entropy method as in
\cite{Jue15,ZaJu16}, i.e.\ to
work with the entropy variables $\mu=\Phi(c)$.
We show first the existence of weak solutions $\mu=(\mu_1,\ldots,\mu_n)$ to a
regularized version of \eqref{1.c2}, define $c=\Phi^{-1}(\mu)$ and perform
the de-regularization limit to obtain the existence of a weak solution
$c$ to \eqref{1.c}. Since $c(x,t)=\Phi^{-1}(\mu(x,t))\in\dom$ a.e.\ by
definition of $\Phi$, $c_i(x,t)$ turns out to be bounded. This
idea avoids the maximum principle and is the core of the boundedness-by-entropy 
method. Let us now detail our main results.

\subsection*{Global existence of solutions}

Using the boundedness-by-entropy method and the energy inequality \eqref{1.ei},
we are able to prove the global existence of bounded weak solutions.
We set $\ctot^0=\sum_{i=1}^n c_i^0$ and $\ctot^\Gamma=\sum_{i=1}^n c_i^\Gamma$.

\begin{theorem}[Existence and large-time asymptotics]\label{thm.ex1}
Let $c_i^0:\Omega\to\dom$, $i=1,\ldots,n$, 
be Lebesgue measurable and let $c^\Gamma=\Phi^{-1}(0)\in\dom$ such that
$\log \ctot^\Gamma\in L^1(\Omega)$, where 
$\Phi:\dom\to\R^n$, $\Phi(c)=\mu$, is defined by \eqref{1.mu}. 
Furthermore, let the matrices $(D_{ij})$ and $(a_{ij})$ be symmetric and
satisfy \eqref{1.D0} as well as
\begin{equation}\label{1.kappa}
  \kappa :=\frac{1}{16}\frac{\min_{i=1,\ldots,n}b_i}{\max_{i=1,\ldots,n}b_i}
	-\frac{\lambda^*}{\min_{i=1,\ldots,n}b_i} > 0,\quad
  K := 1 - \max_{1\leq i,j\leq n}b_{i}^{-1}a_{ij} > 0,
\end{equation}
respectively, where $\lambda^*$ is the maximal eigenvalue of $(a_{ij})$. Then:
\begin{enumerate}
\item[(i)] There exists a weak solution 
$c=(c_1,\ldots,c_n) : \Omega\times (0,\infty)\to\dom$ to 
\eqref{1.c}-\eqref{1.bic} satisfying the free energy inequality \eqref{1.ei} and
\begin{align*}
  & c_i-c_i^\Gamma\in L^2 (0,\infty ;H^1_0(\Omega))\cap 
	H^1 (0,\infty;H^{-1}(\Omega)), \quad i=1,\ldots,n,\\
  & |\na p|\in L^{2} (0,\infty; L^{2}(\Omega)),\quad 
	\log\ctot\in L^{\infty}(0,\infty; L^{2}(\Omega)).
\end{align*} 
\item[(ii)] There exists a constant $C>0$, depending on $\kappa$ and
$\en^*(c^0)=\en(c^0)-\en(c^\Gamma)$ such that
$$ 
  \sum_{i=1}^n\|c_i(t)-c_i^\Gamma\|_{L^2(\Omega)}^2 \le \frac{C}{1+t}\quad
	\mbox{for }t>0.
$$
\end{enumerate}
\end{theorem}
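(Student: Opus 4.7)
The plan is to follow the boundedness-by-entropy method. Using the hypothesis $\kappa>0$ and Lemma \ref{lem.Hess}, the Hessian $\en''(c)$ is uniformly positive definite on $\dom$, so $\Phi:=\en'\colon\dom\to\R^{n}$ is a $C^{1}$ diffeomorphism onto $\R^{n}$ with Lipschitz inverse. Rewriting \eqref{1.c2} in the entropy variable $\mu$ gives
\[
\pa_{t}\Phi^{-1}(\mu)=\diver\bigl(M(\mu)\na\mu\bigr),\qquad M_{ij}(\mu):=c_{i}c_{j}+\eps D_{ij}(c),
\]
with $c=\Phi^{-1}(\mu)$ and homogeneous Dirichlet boundary condition $\mu=0$ on $\pa\Omega$. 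The decisive feature is that every weak solution $\mu$ automatically yields $c(x,t)\in\dom$ pointwise, so the constraints $c_{i}>0$ and $\sum b_{j}c_{j}<1$ are built in without invoking a maximum principle. I would then approximate by an implicit Euler discretization with step $\tau>0$, augmented by a regularization $\delta R_{s}(\mu,\phi)$ coercive on $H^{s}(\Omega;\R^{n})$ for some $s>d/2$, so that $H^{s}\hookrightarrow L^{\infty}$ keeps $\mu^{k}$ bounded and $c^{k}=\Phi^{-1}(\mu^{k})$ in a compact subset of $\dom$. Existence at each step follows from Leray--Schauder; the a priori bound comes from testing with $\mu^{k}$ and using the convexity inequality $\en(c^{k})-\en(c^{k-1})\le\mu^{k}\cdot(c^{k}-c^{k-1})$ together with Gibbs--Duhem $\na p=\sum c_{i}\na\mu_{i}$, producing a discrete analogue of \eqref{1.ei}.

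\textbf{Limit passage in (i).} Summing the discrete entropy inequality and combining with hypocoercivity \eqref{1.D0}, Poincaré (via $\mu|_{\pa\Omega}=0$), and Gibbs--Duhem (to reconstruct the $\ell$-component of $\na\mu$ from $\na p$, using that $\ctot$ is bounded below along the orbit since $c\in\dom$ and $\log\ctot\in L^{\infty}(L^{2})$), yields uniform-in-$(\tau,\delta)$ bounds for $c-c^{\Gamma}$ in $L^{2}(0,T;H^{1}_{0})$ and for $\pa_{t}c$ in $L^{2}(0,T;H^{-s})$. Aubin--Lions then supplies strong $L^{2}$-convergence of $c^{(\tau,\delta)}$; one passes $\delta\to 0$ followed by $\tau\to 0$, identifying the limits in the nonlinear flux by continuity of $\Phi^{-1}$ and $D$. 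The main obstacle in (i) is precisely this identification of the flux $M(\mu)\na\mu$: it rests both on the strong convergence of $c$ and on preserving the constraint $c\in\dom$ in the limit, so that composition with $\Phi^{-1}$ and $D$ remains meaningful; the regularization is tailored to this aim, and the energy estimate is what lets it be removed at the end.

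\textbf{Strategy for (ii).} For the algebraic decay, the starting point is \eqref{1.ei}. I would chain three functional inequalities to turn the dissipation into an $L^{2}$-control of $c-c^{\Gamma}$: hypocoercivity together with Gibbs--Duhem yields $\int_{\Omega}|\na\mu|^{2}\le C_{1}\Psi(t)$, where $\Psi(t):=\int_\Omega|\na p|^{2}+\eps\int_\Omega\na\mu:D(c)\na\mu$ is the total dissipation (using the lower bound on $\ctot$); Poincaré via $\mu|_{\pa\Omega}=0$ gives $\|\mu\|_{L^{2}}^{2}\le C_{2}\|\na\mu\|_{L^{2}}^{2}$; and the Lipschitz estimate for $\Phi^{-1}$ produces $\|c-c^{\Gamma}\|_{L^{2}}^{2}\le C_{3}\|\mu\|_{L^{2}}^{2}$. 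Integrating \eqref{1.ei} in time then gives $\int_{0}^{\infty}\|c(s)-c^{\Gamma}\|_{L^{2}}^{2}\,ds\le C_{4}(\en(c^{0})-\en(c^{\Gamma}))$. To upgrade this time-integrability into the pointwise rate $C/(1+t)$, I would combine the monotone decrease of $t\mapsto\en(c(t))$ with a two-sided comparison $\beta_{1}\|c-c^{\Gamma}\|^{2}\le\en(c)-\en(c^{\Gamma})\le\beta_{2}\|c-c^{\Gamma}\|^{2}$ valid along the bounded orbit, from which $t\mapsto\|c(t)-c^{\Gamma}\|_{L^{2}}^{2}$ is comparable to a monotone decreasing quantity and therefore
\[
t\,\|c(t)-c^{\Gamma}\|_{L^{2}}^{2}\le\frac{\beta_{2}}{\beta_{1}}\int_{0}^{t}\|c(s)-c^{\Gamma}\|_{L^{2}}^{2}\,ds\le C.
\]
The hardest step is the upper comparison $\en(c)-\en(c^{\Gamma})\le\beta_{2}\|c-c^{\Gamma}\|^{2}$ on the orbit, since $\en''$ is not bounded above on all of $\dom$; establishing it requires a quantitative uniform distance of $c$ from $\pa\dom$, which is where the smallness encoded by $\kappa$ and the control $\log\ctot\in L^{\infty}(L^{2})$ play the decisive role.
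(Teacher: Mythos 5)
Your outline for part (i) follows the same boundedness-by-entropy strategy as the paper (implicit Euler plus a regularization giving $L^\infty$ control of $\mu^k$, discrete entropy inequality, Aubin--Lions compactness), but it passes over what is actually the hardest step there: after the limit one only knows $c(x,t)\in\overline{\dom}$, and one must \emph{prove} that $\ctot>0$ a.e.\ (otherwise $\mu=\Phi(c)$ is undefined on $\{\ctot=0\}$ and the weak limit $\xi$ of $\na\Pi\mu^{(\tau)}$ cannot be identified with $\na\Pi\mu$) as well as $\log\ctot\in L^\infty(0,\infty;L^2(\Omega))$. You invoke both facts as if they were available, but they are conclusions, not hypotheses; the paper derives them from $\pa_t\ctot=\diver(\ctot\na p)$ tested with $1/(\delta+\ctot^\Gamma)-1/(\delta+\ctot)$, combined with the lower bound $p\ge K\ctot$. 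That is the only place the assumption $K>0$ enters, and your proposal never uses $K$ at all --- a sign that this step is missing.

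The more serious gap is in (ii). The first link of your chain, $\int_\Omega|\na\mu|^2\,dx\le C\,\Psi(t)$, requires a \emph{uniform pointwise} lower bound $\ctot\ge c_*>0$: the dissipation together with \eqref{1.D0} controls only $\ctot^2|\na\mu|^2$ and $|\na\Pi\mu|^2$, and $\log\ctot\in L^\infty(L^2)$ gives $\ctot>0$ a.e.\ but no uniform bound. Likewise the upper comparison $\en(c)-\en(c^\Gamma)\le\beta_2|c-c^\Gamma|^2$ fails as $\sum_j b_jc_j\to1$, where $\en\to+\infty$ while $|c-c^\Gamma|$ stays bounded, and no uniform distance of the orbit from $\pa\dom$ is established (nor is it expected). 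Note also that if your chain were valid it would in fact yield exponential decay, since it bounds $\int_\Omega\en^*\,dx$ by a constant times the instantaneous dissipation $-\frac{d}{dt}\int_\Omega\en^*\,dx$; the paper explicitly states it cannot prove an exponential rate. The paper's argument is structured precisely to avoid both obstructions: it estimates $\int_\Omega\en^*\,dx\le C_P\int_\Omega|\mu\cdot\na c|\,dx\le C_P\|(\ctot\en'')^{-1}\mu\|_{L^2}\|\ctot\na\mu\|_{L^2}$ and proves the weighted bound $|(\ctot\en'')^{-1}\mu|\le C(1+p+|\log\ctot|)$, which converts the dissipation into $\big(\int_\Omega\en^*\,dx\big)^2\le C(1+\Psi(t))\big(-\frac{d}{dt}\int_\Omega\en^*\,dx\big)$ with $\Psi=\|p\|_{L^2}^2\in L^1(0,\infty)$; a nonlinear Gronwall inequality then gives the $1/(1+t)$ rate, and only the \emph{lower} convexity bound $\en^*\ge\frac{\kappa}{2}|c-c^\Gamma|^2$ is needed to conclude. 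To repair your argument you would have to replace both of your comparison inequalities by estimates of this weighted type.
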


The idea of the large-time asymptotics of $c_i(t):=c_i(\cdot,t)$ 
is to exploit the energy inequality \eqref{1.ei}. 
Since it is difficult to relate the 
free energy $\en$ and its energy dissipation $-d\en/dt$, we cannot prove
an exponential decay rate although numerical experiments in \cite{MiPo16}
and Section \ref{sec.num} indicate that this is the case even when $\eps=0$. 
Instead, we show for the relative
energy $\en^*(c)=\en(c)-\en(c^\Gamma)\ge 0$ that, for some constant $C>0$ and some
nonnegative function $\Psi\in L^{1}(0,\infty)$,
$$
  \frac{d}{dt}\int_\Omega \en^*(c)dx 
  \le -\frac{C}{1+\Psi(t)}\bigg(\int_\Omega \en^*(c)dx\bigg)^2,
$$
from which we deduce that the convergence is of order $1/t$ as $t\to\infty$.
Since the free energy is strictly convex, by Lemma \ref{lem.Hess} below,
we obtain convergence in the $L^2$ norm. 

\subsection*{An integral inequality}

If $\eps=0$, we obtain only a gradient estimate for $p$.
This lack of parabolicity is compensated by the following -- surprising -- 
integral identity,
\begin{equation}\label{1.ii}
  \int_\Omega \ctot(t)f\bigg(\frac{c_1(t)}{\ctot(t)},\ldots,
	\frac{c_{n-1}(t)}{\ctot(t)}\bigg)dx 
	= \int_\Omega \ctot^{0}f\bigg(\frac{c_1^0}{\ctot^0},\ldots,
	\frac{c_{n-1}^0}{\ctot^0}\bigg)dx, \quad t>0,
\end{equation}
for arbitrary functions $f : (0,1)^{n-1}\to\R$; see the
Appendix for a formal proof.
This means that there exists a family of conserved quantities 
depending on a function of $n-1$ variables. It is unclear whether
this identity is sufficient to perform the limit $\eps\to 0$ and to
prove the existence of a solution to \eqref{1.c} with $\eps=0$.

If $\eps>0$, the integral identity \eqref{1.ii} does not hold in general.
However, for specific diffusion matrices
$D(c)$, the following inequality holds in place of \eqref{1.ii}:
\begin{equation}\label{1.ii2}
  \int_\Omega \ctot(t)f\bigg(\frac{c_1(t)}{\ctot(t)},\ldots,
	\frac{c_{n-1}(t)}{\ctot(t)}\bigg)dx
	\le \int_\Omega \ctot^{0}f\bigg(\frac{c_{1}^0}{\ctot^0},\ldots,
	\frac{c_{n-1}^0}{\ctot^0}\bigg)dx,\quad t>0,
\end{equation}
for functions $f$ specified in Theorem \ref{coro} below.
Interestingly, this implies a minimum principle for 
$c_1/\ctot,\ldots,c_{n-1}/\ctot$.
A choice of the diffusion matrix ensuring the validity of \eqref{1.ii2} is,
for given $\alpha$, $\beta\in C^0(\overline\dom)$ with  $\beta\geq 0$, 
$\alpha>0$ in $\overline{\dom}$,
\begin{equation}\label{1.D}
  D(c) = \alpha(c)(\en'')^{-1} + \beta(c)c\otimes c ,
\end{equation}
where $\en''$ is the Hessian of the free energy $\en$.
Clearly, $D(c)$ is bounded and positive definite (although not strictly) for 
$c\in\dom$. In particular, the constraint $\sum_{i=1}^n D_{ij}(c) = 0$ does not hold, 
and so the assumptions of Theorem \ref{thm.ex1} are not satisfied. 
However, with this choice of $D(c)$, equation \eqref{1.c} becomes
\begin{equation}
  \pa_{t}c_{i} = \diver\big((1 + \eps\beta(c))c_{i}\na p + \eps\alpha(c)\na c_{i}\big)\
	\quad i=1,\ldots,n, \label{1.c.simple}
\end{equation}
and the existence proof for \eqref{1.c.simple} is simpler than in the case where $D(c)$ 
satisfies \eqref{1.D0}.

\begin{corollary}[to Theorem \ref{thm.ex1}]\label{coro.ex}
Let $c_i^0:\Omega\to\dom$, $i=1,\ldots,n$, be Lebesgue measurable and let
$c^\Gamma=\Phi^{-1}(0)\in\dom$, where $\Phi:\dom\to\R^n$, $\Phi(c)=\mu$, 
is defined by \eqref{1.mu}. Furthermore, let the matrices $(D_{ij})$ and 
$(a_{ij})$ be symmetric and satisfy \eqref{1.kappa}, \eqref{1.D}.
Then there exists a weak solution $c=(c_1,\ldots,c_n): \Omega\times (0,\infty)\to\dom$
to \eqref{1.c}-\eqref{1.mu}, \eqref{1.bic}, satisfying the free energy 
inequality \eqref{1.ei} and, for $i=1,\ldots,n$,
\begin{align*}
  c_i-c_i^{\Gamma} &\in L^2(0,\infty ;H^1_{0}(\Omega))\cap H^1(0,\infty ; 
	H^{-1}(\Omega)), \\
  \na\sqrt{c_i}, \ \na p,\ \na\log c_{i} &\in L^{2}(0,\infty; L^{2}(\Omega)), \\ 
  \log c_{i} &\in L^{\infty}(0,\infty; L^{1}(\Omega)).
\end{align*}
\end{corollary}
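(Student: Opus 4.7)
The plan is to mirror the boundedness-by-entropy proof of Theorem \ref{thm.ex1}, exploiting the fact that the matrix \eqref{1.D} reduces the system to \eqref{1.c.simple}, in which each equation carries a nondegenerate diffusion $\eps\,\diver(\alpha(c)\na c_i)$ (since $\alpha\ge\alpha_0>0$ on any compactum of $\overline\dom$). This removes the hypocoercivity obstruction that dominated the analysis of Theorem \ref{thm.ex1} and makes the principal part parabolic componentwise.

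The first step is to recast the dissipation. Using $\mu=\en'(c)$ (hence $\na c=(\en'')^{-1}\na\mu$) together with the Gibbs--Duhem identity $c\cdot\na\mu=\na p$, the integrand $\na\mu:D(c)\na\mu$ becomes $\alpha(c)\,\na c:\en''(c)\na c+\beta(c)|\na p|^2$, so that \eqref{1.ei} reads
\[
\frac{d}{dt}\int_\Omega \en(c)\,dx + \int_\Omega(1+\eps\beta(c))|\na p|^2\,dx + \eps\int_\Omega \alpha(c)\,\na c:\en''(c)\na c\,dx \le 0.
\]
Under \eqref{1.kappa}, Lemma \ref{lem.Hess} gives $\en''(c)\ge\delta I$ on compacta of $\dom$, and the $1/c_i$-diagonal of $\en''$ coming from the ideal entropy part of $\en$ yields the bound $\na\sqrt{c_i}\in L^2(0,\infty;L^2(\Omega))$. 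The complementary bounds $\na\log c_i\in L^2$ and $\log c_i\in L^\infty(0,\infty;L^1(\Omega))$ will be obtained by testing \eqref{1.c.simple} (after suitable cutoff) with $-1/c_i$: this produces an identity whose dissipative contribution is $\eps\int\alpha(c)|\na\log c_i|^2\,dx$, while the pressure-coupling term $\int(1+\eps\beta(c))\na p\cdot\na\log c_i\,dx$ is absorbed via Cauchy--Schwarz against the already established bound on $\na p$.

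For the construction I would run the same scheme as in Theorem \ref{thm.ex1}: solve a Galerkin or Leray--Schauder approximation in the entropy variables $\mu$, define $c=\Phi^{-1}(\mu)$ (globally, using strict convexity of $\en$ from Lemma \ref{lem.Hess}), derive the a priori bounds above, and extract strong $L^2$-compactness of $c_i$ via Aubin--Lions from $\na c_i\in L^2$ and $\pa_t c_i\in L^2(0,T;H^{-1}(\Omega))$. The Dirichlet condition $c_i-c_i^\Gamma\in H^1_0$ is inherited from $\mu_i=0$ on $\pa\Omega$ through $\Phi^{-1}$.

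The main obstacle is the passage to the limit in the nonlinear pressure flux $(1+\eps\beta(c))c_i\na p(c)$: strong $L^2$-convergence of $c$ combined with continuity of $\beta$ and of the pressure map on $\overline\dom$ gives strong convergence of the multiplier, while weak $L^2$-convergence of $\na p$ suffices to identify the product in the weak formulation; an analogous strong-weak pairing handles the $\alpha(c)\na c_i$ term. The free-energy inequality and the constraint $c(x,t)\in\dom$ then follow in the limit from weak lower semicontinuity and from the entropy-variable construction $c=\Phi^{-1}(\mu)$.
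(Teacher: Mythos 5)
Your proposal is correct and follows essentially the same route as the paper: construction in the entropy variables with $c=\Phi^{-1}(\mu)$, the dissipation identity $\na\mu:D(c)\na\mu=\alpha\,\na c:\en''(c)\na c+\beta|c\cdot\na\mu|^2$ combined with Lemma \ref{lem.Hess} to obtain $\na\sqrt{c_i}\in L^2$, the test function $-1/(c_i+\delta)$ (with the pressure term absorbed by Young's inequality) to get the logarithmic bounds and the positivity $c_i>0$, and Aubin--Lions with strong--weak pairing to pass to the limit. The only detail your sketch leaves implicit is that the time-discrete regularization must be modified to a componentwise term $\tau\diver(|\na\mu_i|^2\na\mu_i)$, because $D(c)$ from \eqref{1.D} is not uniformly elliptic in the $\mu$-variables (the factor $(\en'')^{-1}$ degenerates as $c_i\to 0$), which is exactly how the paper handles the construction step.
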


Our second main result reads as follows.

\begin{theorem}[Integral inequality and minimum principle]\label{coro}
Let $c_i^0=c_i^\gamma$ for $i=1,\ldots,n$ on $\pa\Omega$.
Under the assumptions of Cor\-ollary \ref{coro.ex}, the solution $c$ to 
\eqref{1.c}-\eqref{1.mu}, \eqref{1.bic} constructed in Corollary \ref{coro.ex} 
satisfies \eqref{1.ii2} for all functions $f\in C^2([0,1]^{n-1})$ such that its 
Hessian $f''$ is positive semidefinite in $[0,1]^{n-1}$ and
\begin{equation}\label{1.f} 
  f\bigg(\frac{c_1^\Gamma}{\ctot^\Gamma},\ldots,
	\frac{c_{n-1}^\Gamma}{\ctot^\Gamma}	\bigg) = 0, \quad
  \bigg|f'\bigg(\frac{c_1^\Gamma}{\ctot^\Gamma},\ldots,
	\frac{c_{n-1}^\Gamma}{\ctot^\Gamma}
	\bigg)\bigg| = 0.
\end{equation}
Moreover, for any $i=1,\ldots,n$, 
$$
  \inf_{\Omega\times(0,\infty)}\frac{c_{i}}{\ctot} \geq\min\left\{
  \inf_{\Omega}\frac{c_{i}^{0}}{\ctot^{0}},\ \frac{c_{i}^{\Gamma}}{\ctot^{\Gamma}}
  \right\}.
$$
\end{theorem}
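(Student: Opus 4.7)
The plan is to prove \eqref{1.ii2} by directly computing $\frac{d}{dt}\int_\Omega \ctot f(Y)\,dx$, where $Y := (c_1/\ctot,\ldots,c_{n-1}/\ctot)$, and exhibiting it as a manifestly nonpositive quantity; the minimum principle then follows by a ``square of the negative part'' choice of $f$.

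I would work with the simpler form \eqref{1.c.simple} of the PDE. Setting $y_i := c_i/\ctot$ and $g(Y) := f(Y) - \sum_{i=1}^{n-1} y_i f_{y_i}(Y)$, the chain rule gives $\pa_t(\ctot f(Y)) = g(Y)\pa_t\ctot + \sum_{i=1}^{n-1} f_{y_i}(Y)\,\pa_t c_i$. Summing \eqref{1.c.simple} over $i$ yields $\pa_t\ctot = \diver((1+\eps\beta)\ctot\na p + \eps\alpha\na\ctot)$, so after inserting both PDEs and integrating by parts over $\Omega$, the identity $\na g(Y) = -\sum_{i=1}^{n-1} y_i\,\na(f_{y_i}(Y))$ (a consequence of the symmetry of $f''$) makes the coefficient of $(1+\eps\beta)\na p$ in the resulting integrand equal to $\sum_i (y_i\ctot - c_i)\na(f_{y_i}(Y))\equiv 0$. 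The surviving $\eps\alpha$ piece collapses, via $\na c_i = y_i\na\ctot + \ctot\na y_i$, to
\[
  \frac{d}{dt}\int_\Omega \ctot f(Y)\,dx
  = -\eps\int_\Omega \alpha(c)\,\ctot\sum_{i,j=1}^{n-1} f_{y_iy_j}(Y)\,\na y_i\cdot\na y_j\,dx \le 0,
\]
since $f''\ge 0$ and $\alpha,\ctot>0$. The boundary contributions produced by the integration by parts vanish because the condition $\mu=0$ on $\pa\Omega$ forces $c=c^\Gamma$, hence $Y=c^\Gamma/\ctot^\Gamma=:Y^\Gamma$ on $\pa\Omega$, and \eqref{1.f} gives $f_{y_i}(Y^\Gamma)=0$ and $f(Y^\Gamma)=0$, so also $g(Y^\Gamma)=0$. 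To make this formal computation rigorous I would carry it out at the Galerkin/regularization level used in Corollary~\ref{coro.ex}, where the solution is smooth enough for the chain rule and the test functions $g(Y), f_{y_i}(Y)$ belong to $H^1_0(\Omega)$, and then pass to the de-regularization limit using the compactness already established there.

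For the minimum principle I would apply \eqref{1.ii2} with $m_i := \min\{\inf_\Omega c_i^0/\ctot^0,\,c_i^\Gamma/\ctot^\Gamma\}$ to the convex functions $f^{(i)}(Y) := ((y_i - m_i)_-)^2$ for $i=1,\ldots,n-1$ and $f^{(n)}(Y) := ((1-\sum_{j=1}^{n-1} y_j) - m_n)_-^2$. By construction $m_i\le c_i^\Gamma/\ctot^\Gamma$, so \eqref{1.f} is satisfied at $Y^\Gamma$, and the right-hand side of \eqref{1.ii2} vanishes because $m_i\le\inf_\Omega c_i^0/\ctot^0$; hence $\int_\Omega \ctot f^{(i)}(Y)\,dx = 0$, which forces $c_i/\ctot\ge m_i$ a.e. The main technical obstacle is that these $f^{(i)}$ are only $C^{1,1}$, not $C^2$ as required by the statement; I would circumvent this by approximating each $f^{(i)}$ by a sequence of convex $C^2$ functions preserving \eqref{1.f} (e.g.\ by mollifying the second derivative $2\mathbf{1}_{y<m_i}$ and integrating twice, then subtracting an affine correction so that the vanishing conditions at $Y^\Gamma$ persist), applying the already-established inequality to each approximant, and passing to the limit by dominated convergence.
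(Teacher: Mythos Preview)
Your proof is correct and follows the same underlying strategy as the paper: the same algebraic cancellation kills the pressure term, leaving a sign-definite dissipation from the convexity of $f$, and the minimum principle then follows from a ``negative part'' choice of $f$. Two technical points differ from the paper's execution and are worth noting.

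First, for the rigorous justification of the integral inequality, the paper does not go back to the approximation scheme (which here is a time discretization with an extra $\tau$-regularizing term, not a Galerkin approximation). Instead it works directly on the constructed weak solution and regularizes the \emph{test function}: it replaces $\psi(c)=\ctot f(c_1/\ctot,\ldots)$ by $\psi_\delta(c)=(\ctot+\delta)f((c_1+\delta)/(\ctot+\delta),\ldots)$ and tests \eqref{1.c.simple} against $\pa\psi_\delta/\pa c_i-\pa\psi_\delta(c^\Gamma)/\pa c_i$. The $\delta$-term spoils the exact cancellation of the pressure contribution, but one checks that $\sum_i c_i\,\pa z_k^\delta/\pa c_i=\delta(c_k-\ctot)/(\ctot+\delta)^2\to 0$ pointwise and boundedly, so the pressure term vanishes in the limit by dominated convergence; Fatou's lemma handles the dissipation. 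This avoids having to control the additional $\tau\,\diver(|\na\mu_i|^2\na\mu_i)$ term against your test functions. Your direct computation on the weak solution is also justifiable (since $\na(c_i/\ctot)=(c_i/\ctot)(\na\log c_i-\na\log\ctot)\in L^2$ by the regularity in Corollary~\ref{coro.ex}), but the $\delta$-regularization makes the argument more transparent.

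Second, for the minimum principle the paper sidesteps your mollification step entirely by taking $f(x)=(m-x)_+^{3}$ instead of $(m-x)_+^{2}$. The cubic is already $C^2$ with $f''(x)=6(m-x)_+\ge 0$, and it satisfies \eqref{1.f} whenever $m\le c_i^\Gamma/\ctot^\Gamma$, so \eqref{1.ii2} applies directly and the right-hand side vanishes by the choice of $m$.
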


\subsection*{Exponential convergence of the pressure}

In the degenerate situation $\eps=0$, 
we are able to show an exponential decay rate for the pressure $p$, at least
for sufficiently smooth solutions whose existence is assumed. 
The key idea of the proof is to analyze the parabolic equation satisfied by $p$,
$$
  \pa_t p = \widetilde D\Delta p + |\na p|^2, \quad\mbox{where }
	\widetilde D = \sum_{i,j=1}^n c_ic_j\frac{\pa^2\en}{\pa c_i\pa c_j}.
$$
Because of the quadratic gradient term, we need a smallness assumption on $\na p$
at time $t=0$. Thus, the exponential convergence result 
holds sufficiently close to equilibrium.

\begin{theorem}[Exponential decay of the pressure]\label{thm3}
Let $\eps=0$, $d=2$, and let $c=(c_1,\ldots,c_n)$ 
be a solution to \eqref{1.c}-\eqref{1.p}
with isobaric boundary conditions $p=p^\Gamma$ on $\pa\Omega$, $t>0$, for
some constant $p^\Gamma\in\R$. Let $m:=\min\{\inf_\Omega p(c^0),p^\Gamma\}>0$.
We assume that
$$
  \na c_i\in L^4_{\rm loc}(0,\infty);L^2(\Omega)), \quad 
	\na p\in C^0([0,\infty);L^2(\Omega))\cap L^2(0,T;H^1(\Omega)),
$$
and $\sup_{\Omega\times(0,T)}\sum_{i=1}^n b_ic_i<1$ for any $T>0$. Then there
exists $K_0>0$, which depends on $\Omega$ and $d$,
such that if $\|\na p(c^0)\|_{L^2(\Omega)}\le K_0 m$, then, for some $\lambda>0$,
$$
  \|\na p(c(t))\|_{L^2(\Omega)} \le \|\na p(c^0)\|_{L^2(\Omega)} 
	e^{-\lambda t}, \quad t>0.
$$
\end{theorem}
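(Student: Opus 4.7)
The plan is to reduce the vector system to a single scalar parabolic equation for the pressure $p$ and then close an energy estimate for $\|\na p\|_{L^2}$ under the smallness hypothesis. Taking $\eps=0$ in \eqref{1.c} gives $\pa_t c_i=\diver(c_i\na p)$. Differentiating $p=p(c)$ in time and using the Gibbs--Duhem identity $\na p=\sum_i c_i\na\mu_i$, one verifies the scalar equation
\begin{equation*}
  \pa_t p=\widetilde D(c)\Delta p+|\na p|^2,\qquad
  \widetilde D(c)=\sum_{i,j=1}^n c_ic_j\frac{\pa^2\en}{\pa c_i\pa c_j},
\end{equation*}
together with the time-independent Dirichlet condition $p=p^\Gamma$ on $\pa\Omega$, which is already announced in the introduction. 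By Lemma~\ref{lem.Hess}, $\widetilde D(c)>0$ on $\dom$.

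Next I would establish the pointwise lower bounds needed to make this equation strictly parabolic. Since the source $|\na p|^2$ is nonnegative and $\widetilde D>0$, the weak parabolic maximum principle applied to $p-m$ gives $p\ge m$ throughout $\Omega\times(0,\infty)$. Combining this pointwise bound with the standing assumption $\sup_{\Omega\times(0,T)}\sum_i b_ic_i<1$ and the hypothesis $K>0$ in \eqref{1.kappa}, I would extract a uniform lower bound $\widetilde D(c)\ge\widetilde m>0$ depending only on $m$ and the van der Waals constants. I expect this to be the main technical obstacle, because $\widetilde D$ mixes the repulsive and attractive contributions in a nontrivial way, and the inequality $a_{ij}<b_i$ encoded in $K>0$ is precisely what prevents its degeneracy as $c$ approaches the boundary of $\dom$.

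With this scalar equation in hand I would test with $-\Delta p$. Because $p^\Gamma$ is constant, $\pa_t p=0$ on $\pa\Omega$, so the boundary terms vanish and
\begin{equation*}
  \tfrac12\tfrac{d}{dt}\|\na p\|_{L^2(\Omega)}^2+\int_\Omega\widetilde D(c)(\Delta p)^2\,dx
  =-\int_\Omega|\na p|^2\Delta p\,dx.
\end{equation*}
The cubic right-hand side is what dictates both the restriction $d=2$ and the smallness of the data. I would estimate it by Cauchy--Schwarz, the two-dimensional Gagliardo--Nirenberg inequality $\|\na p\|_{L^4}^2\le C\|\na p\|_{L^2}\|\na^2p\|_{L^2}$, and elliptic regularity for $p-p^\Gamma\in H^1_0(\Omega)$ in the form $\|\na^2 p\|_{L^2}\le C_{\rm el}\|\Delta p\|_{L^2}$, obtaining
\begin{equation*}
  \bigg|\int_\Omega|\na p|^2\Delta p\,dx\bigg|\le C_0\|\na p\|_{L^2}\|\Delta p\|_{L^2}^2.
\end{equation*}

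To conclude I would run a continuity (bootstrap) argument. Choose $K_0$ so small that $C_0K_0m\le\widetilde m/2$. As long as $\|\na p(t)\|_{L^2}\le K_0 m$, the coefficient $\widetilde m-C_0\|\na p\|_{L^2}\ge\widetilde m/2$, and the Poincar\'e inequality $\|\na p\|_{L^2}^2\le C_P\|\Delta p\|_{L^2}^2$ (valid since $p-p^\Gamma\in H^1_0(\Omega)$) yields the ODE inequality $(d/dt)\|\na p\|_{L^2}^2\le -2\lambda\|\na p\|_{L^2}^2$ with $\lambda:=\widetilde m/(4C_P)$. Gronwall produces exponential decay and, in particular, $\|\na p(t)\|_{L^2}\le\|\na p(c^0)\|_{L^2}\le K_0 m$, so the smallness propagates and the standard continuation argument closes the estimate on $[0,\infty)$, with the decay rate $\lambda$ depending only on $\Omega$, $m$, and the van der Waals constants.
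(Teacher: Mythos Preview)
Your outline matches the paper's proof almost exactly: derive the scalar equation $\pa_t p=\widetilde D\Delta p+|\na p|^2$, establish a uniform lower bound $\widetilde D\ge m$, test with $-\Delta p$, apply the two-dimensional Gagliardo--Nirenberg inequality together with elliptic regularity and Poincar\'e, and close with a continuity argument. The final steps are essentially identical to what the paper does.

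The one real gap is the step you yourself flag as ``the main technical obstacle'': the lower bound on $\widetilde D$. You propose to extract $\widetilde D\ge\widetilde m$ from the condition $K>0$ in \eqref{1.kappa}, but note that Theorem~\ref{thm3} does \emph{not} assume \eqref{1.kappa}; neither $\kappa>0$ nor $K>0$ is available here, so you may not invoke Lemma~\ref{lem.Hess} either. The paper's route is much simpler and assumption-free: a direct computation of $\widetilde D$ from \eqref{2.H} gives
\[
  \widetilde D=\frac{\ctot}{(1-\sum_i b_ic_i)^2}-\sum_{i,j}a_{ij}c_ic_j,
\]
and comparing with \eqref{1.p} yields $\widetilde D-p=\ctot(\sum_i b_ic_i)/(1-\sum_i b_ic_i)^2\ge 0$, i.e.\ $\widetilde D\ge p$. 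Once $p\ge m$ is known, this immediately gives $\widetilde D\ge m$, with $\widetilde m=m$. This inequality is also what makes the minimum principle rigorous: in the paper the test function $(p-m)_-$ is used, and the coefficient in front of $|\na(p-m)_-|^2$ is $\widetilde D-(p-m)_-\ge\widetilde D-(p-m)\ge m$, which requires precisely $\widetilde D\ge p$ rather than merely $\widetilde D>0$. Replace your appeal to $K>0$ by this two-line computation and your argument goes through.
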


The paper is organized as follows. Details on the modeling of the fluid
mixture are presented in Section \ref{sec.model}. Auxiliary results on the
Hessian of the free energy, the relation between $c$ and $\mu$, and the
diffusion matrix \eqref{1.D} are shown in Section \ref{sec.aux}. In
Section \ref{sec.ex1}, we prove Theorem \ref{thm.ex1} and Corollary \ref{coro.ex}, 
while the proofs of Theorems \ref{coro} and \ref{thm3} are presented
in Section \ref{sec.coro} and \ref{sec.thm3}, respectively. 
The evolution of the one-dimensional mass densities and the pressure 
are illustrated numerically in Section \ref{sec.num} for the case $\eps=0$.
Finally, identity \eqref{1.ii} is verified in the Appendix.


\section{Modeling and energy equation}\label{sec.model}

We consider the isothermal flow of $n$ chemical components in a porous 
domain $\Omega\subset\R^d$ with porosity $\varphi$.
The transport of the partial mass densities $c_i$ is governed by 
the balance equations for the mass,
$$
  \pa_t (\varphi c_i) + \diver(c_i v_i) = 0, \quad i=1,\ldots,n,
$$
where $v_i$ is the partial velocity of the $i$th species. 
In order to derive equations for the mass densities only, we impose some
simplifying assumptions. To shorten the presentation, we set all physical 
constants equal to one. Moreover, we set
$\varphi\equiv 1$ to simplify the mathematical analysis. Our results will
be also valid for (smooth) space-dependent porosities.
Introducing the diffusion fluxes by
$J_i=c_i(v_i-v)$, where $v=\sum_{i=1}^n c_iv_i/\ctot$ is the barycentric velocity
and $\ctot=\sum_{i=1}^n c_i$ denotes the total mass density, the balance equations
become
\begin{equation}\label{m.me}
  \pa_t c_i + \diver(c_i v + J_i) = 0, \quad i=1,\ldots,n.
\end{equation}

We suppose that the barycentric velocity is given by Darcy's law $v=-\na p$,
where $p$ is the fluid pressure. We refer to \cite{Whi86} for a justification
of this law. 
The second assumption is that the diffusion fluxes are driven by the gradients
of the chemical potentials $\mu_i$, i.e.\
$J_i = -\eps\sum_{j=1}^n D_{ij}\na\mu_i$ for $i=1,\ldots,n$; see, 
e.g., \cite[Section 4.3]{LMM11}. 
Here, $\eps>0$ is some number and
$D_{ij}$ are diffusion coefficients depending on $c=(c_1,\ldots,c_n)$.
According to Onsager's principle of thermodynamics, the diffusion matrix 
$(D_{ij})$ has to be symmetric and positive semidefinite; moreover, for 
consistency with the definition $J_i = c_i(v_i-v)$, it must hold that
$\sum_{i=1}^n D_{ij}=0$ for $j=1,\ldots,n$.

The equations are closed by specifying the Helmholtz free energy density
\begin{equation}\label{2.H}
  \en(c) = \sum_{i=1}^n c_i(\log c_i-1)
	- \ctot\log\bigg(1-\sum_{j=1}^n b_jc_j\bigg) - \sum_{i,j=1}^n a_{ij}c_ic_j,
\end{equation}
where $b_j$ and $a_{ij}$ are positive numbers, and $(a_{ij})$ is symmetric.
The first term in the free energy is the internal energy and the remaining two
terms are the energy contributions of the van der Waals gas 
\cite[Formula (4.3)]{Joh14}. 

The third assumption is that the fluid is
in a single state, i.e., no phase-splitting occurs. Mathematically, this means
that the free energy must be convex. This is the case
if the maximal eigenvalue of $(a_{ij})$ is sufficiently small; see Lemma 
\ref{lem.Hess}.
The single-state assumption is restrictive from a physical viewpoint. 
It may be overcome by considering the transport equations for each phase separately
and imposing suitable boundary conditions at the interface \cite[Section~1]{LMM11}.
However, this leads to free-boundary cross-diffusion problems which we are not
able to treat mathematically.
Another approach would be to consider a two-phase (or even multi-phase) 
compositional model with overlapping of different phases,
like in \cite{PoMi15}. In such a situation, a new formulation of the thermodynamic 
equilibrium based upon the minimization of the Helmholtz free energy is employed 
to describe the splitting of components among different phases.

The chemical potentials are defined in terms of the free energy by
$$
  \mu_i = \frac{\pa \en}{\pa c_i} 
	= \log c_i - \log\bigg(1-\sum_{j=1}^n b_jc_j\bigg) 
	+ \frac{b_i\ctot}{1-\sum_{j=1}^n b_jc_j} - 2\sum_{j=1}^n a_{ij}c_j,
$$
and the pressure is determined by the Gibbs-Duhem equation \cite[Formula (64)]{BoDr15}
\begin{equation}\label{m.gde}
  p = \sum_{i=1}^n c_i\mu_i - \en(c)
	= \frac{\ctot}{1-\sum_{j=1}^n b_jc_j} - \sum_{i,j=1}^n a_{ij}c_ic_j.
\end{equation}
This describes the van der Waals equation of state for mixtures,
where the parameter $a_{ij}$ is a measure of the attractive force between the
molecules of the $i$th and $j$th species, and the parameter $b_j$ is a measure
of the size of the molecules. The pressure stays finite if $\sum_{j=1}b_jc_j<1$,
which means that the mass densities are bounded. In the literature, many
modifications of the attractive term have been proposed. Examples are the so-called
Peng-Robinson and Soave-Redlich-Kwong equations; see \cite{Soa72}.

Taking the gradient of \eqref{m.gde} and observing that $\pa \en/\pa c_i=\mu_i$,
\eqref{m.gde} can be written as
\begin{equation}\label{2.nap}
  \na p = \sum_{i=1}^n c_i\na\mu_i.
\end{equation}
Therefore, we can formulate \eqref{m.me} as the cross-diffusion equations
$$
  \pa_t c_i = \diver\bigg(\sum_{j=1}^n(c_ic_j + \eps D_{ij})\na\mu_j\bigg),
	\quad i=1,\ldots,n.
$$
Multiplying this equation by $\mu_i$, summing over $i=1,\ldots,n$, observing again that
$\mu_i=\pa \en/\pa c_i$, and integrating by parts, we arrive at the energy equation
$$
  \frac{d}{dt}\int_\Omega \en(c)dx = \int_\Omega\sum_{i=1}^n \mu_i\pa_t c_i dx
	= -\int_\Omega \bigg|\sum_{i=1}^n c_i\na\mu_i\bigg|^2 dx 
	- \eps\int_\Omega \sum_{i,j=1}^n D_{ij}\na\mu_i\cdot\na\mu_j dx.
$$
Since $(D_{ij})$ is assumed to be positive definite on $\mbox{span}\{\ell\}^\perp$,
where $\ell=(1,\ldots,1)/\sqrt{n}$, and $c\cdot\ell = \ctot/\sqrt{n}$,
this gives, thanks to Lemma \ref{lem.ab}, $L^2$ estimates
for $\ctot\na\mu_i$ and, thanks to the equilibrium boundary condition and 
Poincar\'e's inequality, $H^1$ estimates for $c_i$.


\section{Auxiliary results}\label{sec.aux}

First we show a result estimating the norms of two vectors from below.

\begin{lemma}\label{lem.ab}
Let $\alpha$, $\beta\in\R^n$ be such that $|\alpha|=|\beta|=1$. Then, for any
$v\in\R^n$,
$$
  |\alpha\cdot v|^2 + |v-(\beta\cdot v)\beta|^2 \ge \frac14(\alpha\cdot\beta)^2|v|^2.
$$
\end{lemma}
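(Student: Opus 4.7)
I would start by decomposing $v$ along and orthogonal to $\beta$: set $s:=\beta\cdot v$ and $w:=v-s\beta$, so that $w\perp\beta$, $|v|^{2}=s^{2}+|w|^{2}$, and the second term on the left is exactly $|w|^{2}$. The inner product against $\alpha$ then reads
$$
\alpha\cdot v = s(\alpha\cdot\beta) + \alpha\cdot w,
$$
and the inequality to prove becomes $(s(\alpha\cdot\beta)+\alpha\cdot w)^{2}+|w|^{2}\ge\tfrac14(\alpha\cdot\beta)^{2}(s^{2}+|w|^{2})$.

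\textbf{Case split on where the mass of $v$ concentrates.} I would split according to whether $|v|^{2}$ is dominated by its $\beta$-component or by its orthogonal component.

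If $|w|^{2}\ge\tfrac12|v|^{2}$, then using only $|\alpha\cdot\beta|\le|\alpha|\,|\beta|=1$ one gets
$$
|\alpha\cdot v|^{2}+|w|^{2}\ \ge\ |w|^{2}\ \ge\ \tfrac12|v|^{2}\ \ge\ \tfrac14(\alpha\cdot\beta)^{2}|v|^{2},
$$
which handles this case trivially.

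If instead $s^{2}>\tfrac12|v|^{2}$, I would apply the elementary Young-type inequality $(x+y)^{2}\ge\tfrac12 x^{2}-y^{2}$ (which is just $(x/\sqrt2+\sqrt2\,y)^{2}\ge0$) with $x=s(\alpha\cdot\beta)$ and $y=\alpha\cdot w$, obtaining
$$
(\alpha\cdot v)^{2}\ \ge\ \tfrac12 s^{2}(\alpha\cdot\beta)^{2} - (\alpha\cdot w)^{2}.
$$
Since $|\alpha|=1$, Cauchy--Schwarz gives $(\alpha\cdot w)^{2}\le|w|^{2}$, and the $|w|^{2}$ on the left absorbs this negative contribution, leaving
$$
|\alpha\cdot v|^{2}+|w|^{2}\ \ge\ \tfrac12 s^{2}(\alpha\cdot\beta)^{2}\ \ge\ \tfrac14|v|^{2}(\alpha\cdot\beta)^{2},
$$
by the case assumption $s^{2}>|v|^{2}/2$.

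\textbf{Main difficulty.} There is no deep obstacle; the whole argument is elementary once one has the right decomposition and Young constant. The only ``choice'' that matters is using $(x+y)^{2}\ge\tfrac12 x^{2}-y^{2}$ (rather than, say, $(x+y)^{2}\ge(1-\eta)x^{2}-\eta^{-1}y^{2}$ for generic $\eta$): the constant $1/2$ is precisely what makes the negative remainder $(\alpha\cdot w)^{2}$ absorbable into the available $|w|^{2}$, and simultaneously matches the threshold $|v|^{2}/2$ used in the case split to produce the final factor $1/4$.
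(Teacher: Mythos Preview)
Your proof is correct and uses the same decomposition (along and orthogonal to $\beta$) and the same two tools (Young on the cross term, Cauchy--Schwarz for $(\alpha\cdot w)^{2}\le|w|^{2}$) as the paper. The only difference is cosmetic: the paper avoids your case split by choosing a different Young parameter, namely $(x+y)^{2}\ge\tfrac14 x^{2}-\tfrac13 y^{2}$ (i.e.\ $\delta=3/4$ in $(x+y)^{2}\ge(1-\delta)x^{2}-(\delta^{-1}-1)y^{2}$). With that choice the negative remainder $-\tfrac13|w|^{2}$ is strictly dominated by $+|w|^{2}$, and the surviving $\tfrac23|w|^{2}$ is estimated from below by $\tfrac14(\alpha\cdot\beta)^{2}|w|^{2}$ using $(\alpha\cdot\beta)^{2}\le1$, so both the parallel and orthogonal parts carry the factor $\tfrac14(\alpha\cdot\beta)^{2}$ simultaneously and no dichotomy is needed.
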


The constant $1/4$ is not optimal. For instance, if $\alpha=\beta$, we have the
theorem of Pythagoras, $|\alpha\cdot v|^2 + |v-(\beta\cdot v)\beta|^2=|v|^2$.

\begin{proof}
Let $w=(\beta\cdot v)\beta$ be the projection of $v$ on $\beta$
and $w^\perp=v-(\beta\cdot v)\beta$ be the orthogonal part.
Then, clearly, $|v|^2=|w|^2 + |w^\perp|^2$. By Young's inequality
with $\delta=3/4$ and $|\alpha|=1$, we have
\begin{align*}
  |\alpha\cdot v|^2 + |v-(\beta\cdot v)\beta|^2 
	&= |\alpha\cdot(w+w^\perp)|^2 + |w^\perp|^2 \\
	&= (\alpha\cdot w)^2 + (\alpha\cdot w^\perp)^2 
	+ 2(\alpha\cdot w)(\alpha\cdot w^\perp) + |w^\perp|^2 \\
	&\ge (1-\delta)(\alpha\cdot w)^2 + (1-\delta^{-1})(\alpha\cdot w^\perp)^2 
	+ |w^\perp|^2 \\
	&= \frac14(\alpha\cdot w)^2 - \frac13(\alpha\cdot w^\perp)^2 + |w^\perp|^2 \\
	&\ge \frac14(\beta\cdot v)^2(\alpha\cdot\beta)^2
	- \frac13|w^\perp|^2 + |w^\perp|^2.
\end{align*}
We deduce from $|\alpha|=|\beta|=1$ that $(\alpha\cdot\beta)^2\le 1$, and thus,
\begin{align*}
  |\alpha\cdot v|^2 + |v-(\beta\cdot v)\beta|^2 
	&\ge \frac14(\alpha\cdot\beta)^2|w|^2 + \frac23(\alpha\cdot\beta)^2|w^\perp|^2 \\
	&\ge \frac14(\alpha\cdot\beta)^2\big(|w|^2 + |w^\perp|^2\big) 
	= \frac14(\alpha\cdot\beta)^2|v|^2,
\end{align*}
finishing the proof.
\end{proof}

\begin{lemma}[Positive definiteness of $\en''$]\label{lem.Hess}
Let $A=(a_{ij})$, defined in the pressure relation \eqref{1.p}, 
be a symmetric matrix whose maximal eigenvalue $\lambda^*\in\R$ satisfies
\eqref{1.kappa}.
Then the Hessian $\en''$ of the free energy $\en$ is positive definite, i.e.
$$
  v\cdot \en''(c)v\ge \kappa\sum_{i=1}^n \frac{v_i^2}{c_i}
	\quad\mbox{for all }c\in\dom,\ v\in\R^n, 
$$
where $\kappa>0$ is given by \eqref{1.kappa}. In particular, $\ctot\en''$ is
uniformly positive definite.
\end{lemma}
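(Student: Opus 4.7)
The plan is to compute the Hessian $\en''$ explicitly and then bound the indefinite quadratic form $v \cdot \en''(c) v$ from below by a positive multiple of $\sum_i v_i^2/c_i$, with the negative contribution $-2v^\top A v$ absorbed at the end by means of the smallness of $\lambda^*$ built into the definition of $\kappa$.

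Setting $B := 1 - \sum_j b_j c_j > 0$ and differentiating \eqref{1.mu}, a direct calculation gives
\[
\frac{\pa^2 \en}{\pa c_i \pa c_k}(c) = \frac{\delta_{ik}}{c_i} + \frac{b_i + b_k}{B} + \frac{\ctot\, b_i b_k}{B^2} - 2 a_{ik},
\]
so that, introducing the scalars $S := \sum_i v_i$ and $T := \sum_i b_i v_i$,
\[
v \cdot \en''(c) v = \sum_{i=1}^n \frac{v_i^2}{c_i} + \frac{2 S T}{B} + \frac{\ctot T^2}{B^2} - 2 v^\top A v.
\]
Completing the square in $T$ and decomposing $v_i = (S/\ctot)\,c_i + w_i$ with $\sum_i w_i = 0$ uses the identities $\sum_i v_i^2/c_i = S^2/\ctot + \sum_i w_i^2/c_i$ and $T + SB/\ctot = (\ctot W + S)/(B\ctot)$ with $W := \sum_i b_i w_i$, yielding the clean representation
\[
v \cdot \en''(c) v = \sum_{i=1}^n \frac{w_i^2}{c_i} + \frac{(\ctot W + S)^2}{B^2\, \ctot} - 2 v^\top A v,
\]
in which only the last term is indefinite.

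The heart of the argument is to extract from the two positive squares a bound of the form $c^*(S^2/\ctot + \sum_i w_i^2/c_i)$; the obstacle is that $(\ctot W + S)^2$ may vanish (when $S \approx -\ctot W$), which is precisely the regime in which the zero-mean constraint on $w$ must do work. I would exploit $\sum_i w_i = 0$ to rewrite $W = \sum_i (b_i - \bar b)w_i$ for any constant $\bar b$; choosing $\bar b = (1-B)/\ctot$ and applying Cauchy--Schwarz gives the refined bound $W^2 \le b_{\max}(1-B)\sum_i w_i^2/c_i$. Combined with $\ctot < 1/b_{\min}$ (a consequence of $\sum_j b_jc_j<1$), this yields $\ctot^2 W^2 \le (b_{\max}/b_{\min})\sum_i w_i^2/c_i$. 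A Young inequality $(\ctot W + S)^2 \ge (1-\eta)S^2 + (1-1/\eta)(\ctot W)^2$ with $\eta$ chosen proportional to $b_{\min}/b_{\max}$ balances the leftover $(\ctot W)^2$ against $\sum_i w_i^2/c_i$ while preserving a term of order $(b_{\min}/b_{\max})\,S^2/\ctot$; dropping the harmless factor $1/B^2 \ge 1$ and collecting constants then gives
\[
\sum_i \frac{w_i^2}{c_i} + \frac{(\ctot W + S)^2}{B^2\,\ctot} \ge \frac{1}{16}\,\frac{b_{\min}}{b_{\max}} \sum_i \frac{v_i^2}{c_i}.
\]

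The attractive term is finally absorbed via $v^\top A v \le \lambda^* |v|^2$ together with $|v|^2 = \sum_i c_i\,(v_i^2/c_i) \le \ctot \sum_i v_i^2/c_i \le (1/b_{\min}) \sum_i v_i^2/c_i$, which produces the stated bound with $\kappa$ as in \eqref{1.kappa}. The "in particular" statement follows from the trivial inequality $1/c_i \ge 1/\ctot$, which gives $\sum_i v_i^2/c_i \ge |v|^2/\ctot$ and hence $v\cdot(\ctot\en''(c))v \ge \kappa|v|^2$ uniformly in $c \in \dom$. The main technical obstacle is the third step: a naive Young/Cauchy--Schwarz loses a full factor $b_{\max}/b_{\min}$ and yields only a $(b_{\min}/b_{\max})^2$ constant, so exploiting the zero-mean property of $w$ (via the centred estimate of $W^2$) is what produces the first-power ratio $b_{\min}/b_{\max}$ appearing in $\kappa$; the factor $1/16$ is an artefact of the Young weights and is not expected to be sharp.
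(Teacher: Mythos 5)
Your argument is correct in substance but follows a genuinely different route from the paper's. The paper rewrites the positive part of the quadratic form as $(\widehat\alpha\cdot w)^2+|w-(\beta\cdot w)\beta|^2$ with $w_i=v_i/\sqrt{c_i}$, $\widehat\alpha_i=\sigma\sqrt{\ctot c_i}\,b_i+\sqrt{c_i/\ctot}$, $\beta_i=\sqrt{c_i/\ctot}$, and then invokes the geometric Lemma~\ref{lem.ab} ($|\alpha\cdot v|^2+|v-(\beta\cdot v)\beta|^2\ge\tfrac14(\alpha\cdot\beta)^2|v|^2$ for unit vectors) together with a lower bound on $(\alpha\cdot\beta)^2$ of order $b_{\min}/b_{\max}$. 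You instead complete the square and split $v$ into its component along $c$ plus a zero-sum remainder $w$, reducing everything to the centred Cauchy--Schwarz estimate $W^2\le b_{\max}(1-B)\sum_i w_i^2/c_i$. Your route is more elementary (no auxiliary lemma), makes explicit that the only dangerous regime is $S\approx-\ctot W$ and that the zero-mean constraint on $w$ is what rescues it, and with the Young weights worked out it delivers a constant of order $b_{\min}/b_{\max}$ with room to spare over the stated $\tfrac1{16}$. Two small repairs are needed. (i) The intermediate inequality $\ctot^2W^2\le(b_{\max}/b_{\min})\sum_i w_i^2/c_i$ is dimensionally inhomogeneous as written; what your final step actually requires, after dividing $(\ctot W+S)^2$ by $\ctot$, is $\ctot W^2\le(b_{\max}/b_{\min})\sum_i w_i^2/c_i$, which does follow from your centred bound together with $\ctot<1/b_{\min}$ and $1-B<1$. (ii) Your Hessian correctly carries the term $-2a_{ik}$, so absorbing the attractive part costs $2\lambda^*/b_{\min}$ rather than $\lambda^*/b_{\min}$, and you cannot literally claim to land on $\kappa$ as defined in \eqref{1.kappa}; note, however, that the paper's own proof writes $(\en'')_{ij}=B_{ij}-a_{ij}$, apparently dropping the same factor $2$, so this discrepancy is inherited from the statement rather than introduced by your argument.
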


\begin{proof}
A straightforward computation shows that $(\en'')_{ij}=B_{ij}-a_{ij}$, where
$$
  B_{ij} = (b_i+b_j)\sigma + b_ib_j\ctot\sigma^{2} 
	+ \frac{\delta_{ij}}{c_i},
	\quad \sigma = \frac{1}{1-\sum_{j=1}^n b_jc_j}\ge 1.
$$
Let $v\in\R^n$. It holds that
\begin{align*}
  \sum_{i,j=1}^n B_{ij}v_iv_j
	&= 2\sigma\bigg(\sum_{i=1}^n v_i\bigg)\bigg(\sum_{j=1}^n b_jv_j\bigg)
	+ \sigma^2\ctot\bigg(\sum_{i=1}^n b_jv_j\bigg)^2
	+ \sum_{i=1}^n\frac{v_i^2}{c_i} \\
	&= \bigg(\sigma\sqrt{\ctot}\sum_{j=1}^n b_jv_j + \frac{1}{\sqrt{\ctot}}
	\sum_{i=1}^n v_i\bigg)^2 + \sum_{i=1}^n\frac{v_i^2}{c_i}
	- \frac{1}{\ctot}\bigg(\sum_{i=1}^n v_i\bigg)^2.
\end{align*}
Defining $w_i=v_i/\sqrt{c_i}$, 
$\widehat\alpha_i = \sigma\sqrt{\ctot c_i}b_i + \sqrt{c_i/\ctot}$, and 
$\beta_i=\sqrt{c_i/\ctot}$ for $i=1,\ldots,n$, the quadratic form can be rewritten
as
$$
  \sum_{i,j=1}^n B_{ij}v_iv_j = (\widehat\alpha\cdot w)^2
	+ |w-(\beta\cdot w)\beta|^2.
$$
Since $|\widehat\alpha|^2=\sum_{i=1}^n\sigma^2
(\ctot b_i^2c_i + 2\sigma b_ic_i) + 1 \ge 1$,
we may define $\alpha=\widehat\alpha/|\widehat\alpha|$, which yields
\begin{equation}\label{2.aux1}
  \sum_{i,j=1}^n B_{ij}v_iv_j \ge (\alpha\cdot w)^2	+ |w-(\beta\cdot w)\beta|^2.
\end{equation}
The norm of $\widehat\alpha$ can be estimated from above:
$$
  |\widehat\alpha|^2 \le \sigma^2\ctot\max_{j=1,\ldots,n}b_j\sum_{i=1}^n b_ic_i 
	+ 2\sigma\sum_{i=1}^n b_ic_i + 1.
$$
Since $\sum_{i=1}^n b_ic_i<1$, we have $\min_{j=1,\ldots,n}b_j\ctot<1$
or $\ctot<1/\min_{j=1,\ldots,n}b_j$, and $\sum_{i=1}^n b_ic_i<1\le\sigma$.
Therefore,
$$
  |\widehat\alpha|^2 \le \sigma^2\frac{\max_{j=1,\ldots,n}b_j}{\min_{j=1,\ldots,n}b_j}
	+ 2\sigma^2 + \sigma^2 
	= \bigg(3+\frac{\max_{j=1,\ldots,n}b_j}{\min_{j=1,\ldots,n}b_j}\bigg)\sigma^2.
$$
We infer that $\alpha\cdot\beta$ is strictly positive:
$$
  (\alpha\cdot\beta)^2 
	= |\widehat\alpha|^{-2}\bigg(1 + \sigma\sum_{i=1}^n b_ic_i\bigg)^2
	\ge \frac{(\sigma^{-1}+\sum_{i=1}^n b_ic_i)^2}{3+\frac{\max_{j=1,\ldots,n}b_j}{
	\min_{j=1,\ldots,n}b_j}}
	= \frac{1}{3+\frac{\max_{j=1,\ldots,n}b_j}{\min_{j=1,\ldots,n}b_j}}.
$$

We apply Lemma \ref{lem.ab} to \eqref{2.aux1} to obtain
$$
   4\sum_{i,j=1}^n B_{ij}v_iv_j \ge (\alpha\cdot\beta)^2|w|^2
	\ge \frac{|w|^2}{3+\frac{\max_{j=1,\ldots,n}b_j}{\min_{j=1,\ldots,n}b_j}},
$$
which, since $w_i = v_i/\sqrt{c_i}$, implies that
\begin{equation}
  4\sum_{i,j=1}^n B_{ij}v_iv_j 
	\ge \frac{\min_{j=1,\ldots,n}b_j}{3\min_{j=1,\ldots,n}b_j 
	+ \max_{j=1,\ldots,n}b_j}\sum_{i=1}^n\frac{v_i^2}{c_i}
	\ge \frac{\min_{j=1,\ldots,n}b_j}{4\max_{j=1,\ldots,n}b_j}\sum_{i=1}^n
	\frac{v_i^2}{c_i}. \label{Bvv}
\end{equation}
The relation $\ctot<1/\min_{j=1,\ldots,n}b_j$ and the definition of $\lambda^*$ 
allow us to write
$$
  \sum_{i,j=1}^n a_{ij}v_i v_j 
	\leq \ctot\sum_{i,j=1}^n a_{ij}\frac{v_i}{\sqrt{c_i}}\frac{v_j}{\sqrt{c_j}} 
	\leq \frac{\lambda^*}{\min_{j=1,\ldots,n}b_j}\sum_{i=1}^n\frac{v_i^2}{c_i}.
$$
This, together with \eqref{Bvv}, yields the desired lower bound for $\en''$.
\end{proof}

\begin{lemma}[Invertibility of $c\mapsto\mu$]\label{lem.inv}
The mapping $\Phi:\dom\to\R^n$, $\Phi(c)=\mu:=(\mu_1,\ldots,\mu_n)$ is invertible.
\end{lemma}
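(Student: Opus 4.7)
The plan is to derive bijectivity of $\Phi = \en'$ from the strict convexity of $\en$ (Lemma~\ref{lem.Hess}) via a variational argument on the auxiliary functional
$$F_\mu(c) := \en(c) - \mu\cdot c, \quad c\in\dom.$$
Injectivity is immediate from strict monotonicity of the gradient of a strictly convex function: if $\Phi(c^1) = \Phi(c^2)$ for some $c^1 \neq c^2$ in $\dom$, then $(\Phi(c^1) - \Phi(c^2))\cdot(c^1 - c^2) = 0$, contradicting the positive definiteness of $\en''$ given by Lemma~\ref{lem.Hess}.

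For surjectivity, I fix an arbitrary $\mu\in\R^n$ and aim to show that $F_\mu$ attains its infimum at some $\bar c\in\dom$; the first-order condition then reads $\nabla\en(\bar c) = \mu$, i.e.\ $\Phi(\bar c) = \mu$. First I would check that $F_\mu$ is bounded below on $\dom$: the set $\dom$ is bounded (by $0 < c_i < 1/b_i$), so $-\sum_{ij} a_{ij} c_i c_j$ and $-\mu\cdot c$ are bounded on $\dom$; the entropic part satisfies $\sum_i c_i(\log c_i - 1) \ge -n/e - \ctot$ and is therefore bounded below; and $-\ctot\log(1 - \sum_j b_j c_j) \ge 0$ since $0 < 1 - \sum_j b_j c_j < 1$. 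Taking a minimizing sequence $c^k\in\dom$ and using boundedness of $\dom$, I extract a subsequence $c^k\to\bar c\in\overline\dom$.

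The main obstacle is excluding $\bar c\in\partial\dom$. If $\sum_j b_j\bar c_j = 1$, then $\bar\ctot \ge 1/\max_j b_j > 0$ forces $-\ctot^k\log(1 - \sum_j b_j c^k_j)\to +\infty$ while the other terms of $F_\mu(c^k)$ remain bounded, so $F_\mu(c^k)\to +\infty$, contradicting boundedness below of $F_\mu$. The delicate case is $\bar c_i = 0$ for some $i$ while $\sum_j b_j\bar c_j < 1$: here $F_\mu$ extends continuously to $\bar c$ (because $x\log x\to 0$ as $x\to 0^+$), so the continuity argument alone does not rule out a boundary infimum. The key point is that $\partial F_\mu/\partial c_i\,(c) = \mu_i(c) - \mu_i \to -\infty$ as $c_i\to 0^+$; concretely, a direct Taylor expansion of the one-dimensional slice $\delta\mapsto F_\mu(\bar c + \delta e_i)$ yields
$$F_\mu(\bar c + \delta e_i) - F_\mu(\bar c) = \delta\log\delta + O(\delta)\quad\text{as }\delta\to 0^+,$$
which is strictly negative for $\delta$ sufficiently small (since $\delta|\log\delta|$ dominates the $O(\delta)$ remainder). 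This contradicts $F_\mu(\bar c) = \inf_\dom F_\mu$ and excludes boundary minimizers of this second type as well.

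Having ruled out both boundary scenarios, $\bar c\in\dom$, and by continuity of $F_\mu$ there, $\bar c$ is a genuine interior minimizer yielding $\Phi(\bar c) = \mu$; strict convexity of $F_\mu$ further guarantees uniqueness and hence bijectivity of $\Phi$. The subtle step is the second boundary case: the free energy is perfectly finite on the face $\{c_i = 0\}$, so one really must exploit the blow-up of $\mu_i$ (equivalently, the vertical tangent of $F_\mu$ in the $e_i$-direction) to manufacture a descent direction into the interior.
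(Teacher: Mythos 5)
Your argument is correct in substance but takes a genuinely different route from the paper. The paper also derives injectivity from the positive definiteness of $\en''$ (Lemma~\ref{lem.Hess}), but for surjectivity it argues topologically: $\Phi(\dom)$ is open because $\en''$ is nonsingular, and closed because any sequence $c^{(m)}$ with $\Phi(c^{(m)})$ convergent cannot accumulate at $\pa\dom$ (some component of $\mu^{(m)}$ would diverge to $-\infty$ if a mass density vanishes away from the singular hyperplane, or to $+\infty$ if $\sum_j b_jc_j\to 1$); connectedness of $\R^n$ then gives $\Phi(\dom)=\R^n$. You instead minimize the Legendre-type functional $F_\mu=\en-\mu\cdot c$ and exclude boundary minimizers. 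The same two boundary mechanisms drive both proofs (blow-up of the van der Waals term as $\sum_jb_jc_j\to1$, infinite negative slope of $c_i\log c_i$ as $c_i\to0^+$), so the arguments are close in spirit; your variational version avoids the inverse function theorem and the open--closed--connected step, at the price of verifying boundedness below and dealing with the continuous extension of $F_\mu$ to the faces $\{c_i=0\}$.

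One step needs tightening. In the second boundary case you perturb only along $e_i$; if several coordinates of $\bar c$ vanish simultaneously, $\bar c+\delta e_i$ still lies on a face of $\overline\dom$ rather than in $\dom$, so the inequality $F_\mu(\bar c+\delta e_i)\ge\inf_\dom F_\mu$ is not immediate. Either add the (easy) observation that the continuous extension of $F_\mu$ to $\overline\dom\cap\{\sum_jb_jc_j<1\}$ has the same infimum as $F_\mu$ on $\dom$ by density, or, more simply, perturb along $\delta\sum_{i\in J}e_i$ with $J=\{i:\bar c_i=0\}$, which lands in $\dom$ and gives $F_\mu(\bar c+\delta\sum_{i\in J}e_i)-F_\mu(\bar c)=|J|\,\delta\log\delta+O(\delta)<0$. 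Also, in the first boundary case the contradiction is with the finiteness of $\inf_\dom F_\mu$ attained along the minimizing sequence, not with boundedness below; the argument is otherwise sound.
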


\begin{proof}
Since $\en''=\pa\mu/\pa c$ is positive 
definite in $\dom$, it follows that $\Phi$ is one-to-one and the image $\Phi(\dom)$
is open. We claim that $\Phi(\dom)$ is also closed. Then
$\Phi(\dom)=\R^2$, and the proof is complete.

Let $\mu^{(m)}=\Phi(c^{(m)})$, $m\in\N$, define a sequence in $\Phi(\dom)$ such that
$\mu^{(m)}\to\overline\mu$ as $m\to\infty$. The claim follows if we prove that
there exists $\overline c\in\dom$ such that $\overline\mu=\Phi(\overline c)$.
Since $c^{(m)}=(c_1^{(m)},\ldots,c_n^{(m)})\in\dom$ 
varies in a bounded subset of $\R^n$,
the theorem of Bolzano-Weierstra{\ss} implies the existence of a subsequence,
which is not relabeled, such that $c_i^{(m)}$ converges to some $\overline c_i$
as $m\to\infty$, where $\overline c_i\in\overline{\dom}$, $i=1,\ldots,n$. We assume,
by contradiction, that $\overline c=(\overline c_1,\ldots,\overline c_n)\in\pa\dom$. 
Let us distinguish two cases.

{\em Case 1:} There exists $j\in\{1,\ldots,n\}$ such that $\overline c_j=0$.
If $\sum_{i=1}^n b_i\overline c_i<1$, then \eqref{1.mu} implies that 
$\mu^{(m)}_j\to-\infty$, which contradicts the fact that $\mu^{(m)}$ is convergent. 
Thus it holds that $\sum_{i=1}^n b_i\overline c_i=1$.
This means that $\overline c_k>0$ for some $k\in\{1,\ldots,n\}$. However, 
choosing $i=k$ in \eqref{1.mu} and exploiting the relation 
$\sum_{i=1}^n b_i\overline c_i=1$ leads to $\overline\mu_i=+\infty$, contradiction.

{\em Case 2:} For all $i\in\{1,\ldots,n\}$, it holds that $\overline c_i>0$
and $\sum_{i=1}^n b_i\overline c_i=1$. Arguing as in case 1, it follows that
$\overline\mu_i=+\infty$ for all $i=1,\ldots,n$, which is absurd.

We conclude that $\overline c\in\dom$, which finishes the proof.
\end{proof}

\begin{lemma}\label{lem.B}
Let $B(c) = c\otimes c + \eps D(c)$, $\widehat{B}(c) = \hc\otimes\hc + \eps D(c)$ 
for $c\in\dom$, where $\hc = c/|c|$ and $D(c)$ satisfies \eqref{1.D0}. Then
for all $v\in\R^n$ and $c\in\dom$,
$$
  v\cdot B v \geq k_B \left( \ctot^2 |v|^2 + |\Pi v|^2\right), \quad
  v\cdot\widehat{B} v\geq k_B'|v|^2, 
$$
where $k_B>0$, $k_B'>0$ only depend on $\eps D_0$ (the constant in \eqref{1.D0}) 
and $b_1,\ldots,b_n$.
\end{lemma}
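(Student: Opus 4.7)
The plan is to extract everything from one pointwise consequence of Lemma~\ref{lem.ab}. I would apply that lemma with the unit vectors $\alpha=\hc=c/|c|$ and $\beta=\ell=(1,\ldots,1)/\sqrt n$. The inner product $\hc\cdot\ell=\ctot/(|c|\sqrt n)$ is uniformly bounded away from $0$ on $\dom$: positivity of the components gives $|c|\le\ctot$, so $(\hc\cdot\ell)^2=\ctot^2/(n|c|^2)\ge 1/n$. Lemma~\ref{lem.ab} then produces the pointwise estimate
\[
  (\hc\cdot v)^2 + |\Pi v|^2 \;\ge\; \frac{\ctot^2}{4n\,|c|^2}\,|v|^2,
  \qquad v\in\R^n,\ c\in\dom,
\]
which serves as the workhorse for both coercivity bounds.

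For $\widehat B(c)$ the conclusion is then immediate: combining hypocoercivity \eqref{1.D0} with the display above,
\[
  v\cdot\widehat B v \;\ge\; (\hc\cdot v)^2 + \eps D_0|\Pi v|^2
  \;\ge\; \min(1,\eps D_0)\big[(\hc\cdot v)^2+|\Pi v|^2\big]
  \;\ge\; \frac{\min(1,\eps D_0)}{4n}\,|v|^2,
\]
so $k_B':=\min(1,\eps D_0)/(4n)$ suffices.

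For $B(c)$ I would write $(c\cdot v)^2=|c|^2(\hc\cdot v)^2$ and estimate
\[
  v\cdot B v \;\ge\; |c|^2(\hc\cdot v)^2+\eps D_0|\Pi v|^2
  \;\ge\; \min\!\Big(1,\frac{\eps D_0}{|c|^2}\Big)\cdot|c|^2\big[(\hc\cdot v)^2+|\Pi v|^2\big]
  \;\ge\; \min\!\Big(1,\frac{\eps D_0}{|c|^2}\Big)\frac{\ctot^2}{4n}\,|v|^2.
\]
Since the definition of $\dom$ forces $|c|^2\le\ctot^2\le M$ with $M:=1/(\min_j b_j)^2$, the ratio $\eps D_0/|c|^2$ is bounded below by $\eps D_0/M$, producing a first bound of the form $c_1\ctot^2|v|^2$ with $c_1:=\min(1,\eps D_0/M)/(4n)$. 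Averaging this with the trivial hypocoercive bound $v\cdot Bv\ge\eps D_0|\Pi v|^2$ then gives
\[
  v\cdot B v \;\ge\; \tfrac12\min(c_1,\eps D_0)\big(\ctot^2|v|^2+|\Pi v|^2\big),
\]
which is the claimed inequality with $k_B:=\min(c_1,\eps D_0)/2$ depending only on $\eps D_0$ and on the $b_j$'s (through $M$ and $n$).

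The content of the argument is structural rather than computational: the rank-one matrix $c\otimes c$ is coercive along $\hc$, while $\eps D(c)$ is coercive on the orthogonal complement of $\ell$, and these two directions together span $\R^n$ precisely because $\hc\cdot\ell$ is bounded away from $0$ on $\dom$. Lemma~\ref{lem.ab} turns this angle into a quantitative lower bound and is the only delicate point; the remaining manipulations are elementary, and the $b$-dependence enters solely through the a priori bound $\ctot\le 1/\min_j b_j$ that is built into $\dom$.
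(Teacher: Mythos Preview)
Your argument is correct and follows essentially the same route as the paper: both apply Lemma~\ref{lem.ab} with $\alpha=\hc$ and $\beta=\ell$, use $(\hc\cdot\ell)^2=\ctot^2/(n|c|^2)\ge 1/n$ from positivity of the $c_i$, and invoke $\ctot\le 1/\min_j b_j$ for the $b$-dependence. The only cosmetic difference is that the paper splits the $\eps D_0|\Pi v|^2$ term in half before applying the lemma, whereas you first derive the $\ctot^2|v|^2$ bound in full and then average it with the separate $|\Pi v|^2$ bound; the constants differ slightly but the content is the same.
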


\begin{proof}
{}From \eqref{1.D0}, $|c|^2\ge \ctot^2/n$, and 
$\ctot\leq 1/\min\{b_1,\ldots,b_n\}$ it follows
that
\begin{align*}
  v\cdot B v &\geq (c\cdot v)^2 + \eps D_0 |\Pi v|^2 \\
  &\geq \ctot^2\bigg( \frac{1}{n}(\hc\cdot v)^2 
  + \frac{\eps}{2} D_0\min\{b_1,\ldots,b_n\}^2|(I-\ell\otimes\ell)v|^2\bigg)
  + \frac{\eps}{2} D_0 |(I-\ell\otimes\ell)v|^2, 
\end{align*}
Applying Lemma \ref{lem.ab} with $\alpha=\hc$ and $\beta=\ell$ to the expression 
in the brackets yields
$$
  v\cdot B v \geq \frac{1}{4}\min\left\{\frac{1}{n}, \frac{\eps}{2} 
	D_0\min\{b_1,\ldots,b_n\}^2\right\}\ctot^2 (\hc\cdot\ell)^2 |v|^2 
  + \frac{1}{2}\eps D_0|(I-\ell\otimes\ell)v|^2 .
$$
Since $|\hc\cdot\ell| = \ctot/|c|\geq 1$, this finishes the proof of the 
first inequality. The second one is proved in an analogous way.
\end{proof}


\section{Proof of Theorem \ref{thm.ex1}}\label{sec.ex1}

We consider the following time-discretized and regularized problem in $\Omega$:
\begin{align}\label{3.disc}
  \frac{c_i^{k}-c_i^{k-1}}{\tau} 
	&= \diver\bigg(\sum_{j=1}^n B_{ij}^{k}\na\mu_j^{k}\bigg) \\
  &\phantom{xx}{}+ \tau\diver\bigg( |\hc^k\cdot\na\mu^k|^2
	(\hc^k\cdot\na\mu^k)\hc^k_i 
	+ (\na\mu^k: D^k\na\mu^k)\sum_{j=1}^n D_{ij}^k\na\mu_j^k \bigg), \nonumber
\end{align}
with homogenous Dirichlet boundary conditions
\begin{equation}\label{3.bc}
  \mu_i^{k} = 0\quad\mbox{on }\pa\Omega,\ i=1,\ldots,n,
\end{equation}
where $c_i^{k-1}\in L^\infty(\Omega)$ is given, $\tau>0$,
$c^{k} = \Phi^{-1}(\mu^{k})$, $\hc^k = c^k/|c^k|$, $D^k=D(c^k)$, and
$$
  B_{ij}^{k} = c_i^{k}c_j^{k} + \eps D_{ij}(c^{k}).
$$
We write $\na\mu^k: D^k\na\mu^k 
= \sum_{i,j=1}^n D_{ij}^{k}(c)\na\mu_i^{k}\cdot\na\mu_j^{k}$.
Note that $B^{k}=(B_{ij}^{k})$ is positive definite by Lemma \ref{lem.B}.

\subsection{Existence for the time-discretized problem.} We reformulate
\eqref{3.disc}-\eqref{3.bc} as a fixed-point problem for a suitable operator.
Let $F:L^\infty(\Omega;\R^n)\times[0,1]\to L^\infty(\Omega;\R^n)$,
$F(\mu^*,\sigma)=\mu$, where $\mu=(\mu_1,\ldots,\mu_n)$ solves
\begin{align}\label{3.semi}
  \frac{\sigma}{\tau}(c_i^{*}-c_i^{k-1}) 
	&= \diver\bigg(\sum_{j=1}^n B_{ij}^{*}\na\mu_j\bigg)\\
  &\phantom{xx}{}+ \tau\diver\bigg( |\hc^*\cdot\na\mu|^2(\hc^*\cdot\na\mu)\hc^*_i 
	+ (\na\mu: D^*\na\mu)\sum_{j=1}^n D_{ij}^*\na\mu_j \bigg), \nonumber
\end{align}
where
$$
  B^*_{ij} = c^*_i c^*_j + \eps D_{ij}^*, \quad D^* = D(c^*), 
	\quad c^*=\Phi^{-1}(\mu^*).
$$
In order to solve \eqref{3.semi}, we show that the operator $\A:X\to X'$ defined by
\begin{align*}
  \langle \A(u), v \rangle &= \int_\Omega\big( \na v: B^*\na u 
	+ \tau(\hc^*\cdot\na v)\cdot(\hc^*\cdot\na u)|\hc^*\cdot\na u|^2 \\
  &\phantom{xx}{}+ \tau(\na u: D^*\na u)(\na v: D^*\na u) \big) dx
\end{align*}
with $X=W_0^{1,4}(\Omega;\R^n)$ satisfies the assumptions of Theorem 26A in
\cite{Zei90}. Since $\mu^*\in L^\infty(\Omega;\R^n)$, we have
$B^*$, $D^*\in L^\infty(\Omega;\R^{n\times n})$, and $\A $ is well defined.

{\em Strict monotonicity:} Let $u$, $v\in X$. Then
\begin{align*}
  &\langle \A (u)-\A (v),u-v\rangle = \int_\Omega\na(u-v): B^*\na(u-v) dx\\
  &\phantom{x}{}+ \tau\int_\Omega(\hc^*\cdot\na(u-v))\cdot\big( (\hc^*\cdot\na u)
	|\hc^*\cdot\na u|^2 - (\hc^*\cdot\na v)|\hc^*\cdot\na v|^2 \big) dx\\
  &\phantom{x}{}+ \tau\int_\Omega\na(u-v):\left( (\na u: D^*\na u)D^*\na u 
	- (\na v: D^*\na v)D^*\na v \right) dx =: I_1 + I_2 + I_3.
\end{align*}
The positive definiteness of $B^*$ implies that $I_1\ge 0$. We claim that
also $I_2\ge 0$. Indeed, by decomposing $\na u=\frac12\na(u+v)+\frac12\na(u-v)$
and $\na v=\frac12\na(u+v)-\frac12\na(u-v)$, we obtain
\begin{align*}
  (\hc^*\cdot\na u) |\hc^*\cdot\na u|^2 &- (\hc^*\cdot\na v)|\hc^*\cdot\na v|^2\\
  &= \frac{1}{2}\big(\hc^*\cdot\na(u+v)\big)\big( |\hc^*\cdot\na u|^2 
	- |\hc^*\cdot\na v|^2 \big) \\
  &\phantom{xx}{}+ \frac{1}{2}\big(\hc^*\cdot\na(u-v)\big)\big( 
	|\hc^*\cdot\na u|^2 + |\hc^*\cdot\na v|^2 \big),
\end{align*}
and since $(\hc^*\cdot\na(u-v))\cdot(\hc^*\cdot\na(u+v)) 
= |\hc^*\cdot\na u|^2 - |\hc^*\cdot\na v|^2$, we deduce that
$$
  I_2 = \frac{\tau}{2}\int_\Omega\left( 
  ( |\hc^*\cdot\na u|^2 - |\hc^*\cdot\na v|^2 )^2 
	+ |\hc^*\cdot\na(u-v)|^2 ( |\hc^*\cdot\na u|^2 + |\hc^*\cdot\na v|^2 )\right)dx,
$$
which means that $I_2\geq 0$. With the same technique one can prove 
that also $I_3\geq 0$. We conclude that $\A $ is monotone. If 
$\langle \A (u)-\A (v),u-v\rangle=0$, then in particular $I_1=0$, which, thanks to
the positive definiteness of $B^*$, implies that $\na u=\na v$ and $u=v$ in $X$.
Therefore, $\A $ is strictly monotone.

{\em Coercivity:} Let $u\in X$. Since $B^*$ is positive definite, we find that
\begin{align*}
  \langle \A (u), u\rangle 
	&\geq \tau\int_\Omega\left( |\hc^*\cdot\na u|^4 + (\na u: D^*\na u)^2 \right)dx\\
  &\geq \frac{\tau}{2}\int_\Omega \big(|\hc^*\cdot\na u|^2 + \na u: D^*\na u\big)^2 dx.
\end{align*}
Lemma \ref{lem.B} implies that
$|\hc^*\cdot\na u|^2 + \na u: D^*\na u\geq k_B' |\na u|^2$, so
we infer from Poincar\'e's inequality (with constant $C_P>0$) that
$$
  \frac{\langle \A (u),u\rangle}{\|u\|_X}
  \geq \frac{\tau (k_B')^2}{2\|u\|_X}\int_\Omega|\na u|^4 dx
  \geq \frac{\tau}{2}(k_B')^2 C_P\|u\|_X^3 \to\infty
$$
as $\|u\|_X\to\infty$. Thus, $\A $ is coercive.

{\em Hemicontinuity:} Let $u$, $v$, $w\in X$. The function $t\mapsto
\langle \A (u+tv),w\rangle$ is a polynomial and is, in particular, continuous.
It follows that $\A $ is hemicontinuous.

The assumptions of Theorem 26A in \cite{Zei90} are fulfilled, and we infer
the existence of a unique solution $\mu\in X$ to \eqref{3.semi}. This shows
that the operator $F$ is well defined. If $\sigma=0$, we have $F(\cdot,0)=0$
thanks to the uniqueness of the solution to \eqref{3.semi}. A uniform bound
for all fixed points to \eqref{3.semi} and $\sigma\in[0,1]$ follows from
the above coercivity estimate for $\A $.

Let us show that $F$ is continuous. Then, because of the compact embedding
$W^{1,4}(\Omega)\hookrightarrow L^\infty(\Omega)$ for $d\le 3$, $F$ is also
compact. Let $(\mu^*)^{(m)}\in L^\infty(\Omega;\R^n)$, $n\in\N$, define a
sequence converging to $\overline\mu^*$ in $L^\infty(\Omega)$ and let
$\sigma^{(m)}\subset[0,1]$ be such that $\sigma^{(m)}\to\overline\sigma$ 
as $m\to\infty$. Set $\mu^{(m)}:=F((\mu^*)^{(m)},\sigma^{(m)})$ and
$\overline\mu := F(\overline\mu^*,\overline\sigma)$. 
The claim follows if we show that
$\mu^{(m)}\to\overline\mu$ in $L^\infty(\Omega)$.
We formulate \eqref{3.semi} compactly as $\A [\mu^*](\mu)=f(\mu^*,\sigma)$,
where $f(\mu^*,\sigma)=\sigma\tau^{-1}(c^*-c^{k-1})$, putting in
evidence the dependence on $\mu^*$. By definition, $\A [(\mu^*)^{(m)}](\mu^{(m)})
= f((\mu^*)^{(m)},\sigma^{(m)})$ and $\A [\overline\mu^*](\overline\mu)
= f(\overline\mu^*,\overline\sigma)$. It follows that
\begin{align}
  \big\langle & \A [(\mu^*)^{(m)}](\mu^{(m)}) - \A [(\mu^*)^{(m)}](\overline\mu),
	\mu^{(m)}-\overline\mu\big\rangle 
	+ \langle \A [(\mu^*)^{(m)}](\overline\mu) - \A [\overline\mu^*](\overline\mu),
	\mu^{(m)}-\overline\mu\big\rangle \nonumber \\
	&= \big\langle f((\mu^*)^{(m)},\sigma^{(m)}) - f(\overline\mu^*,\overline\sigma),
	\mu^{(m)}-\overline\mu\big\rangle. \label{3.aux1}
\end{align}
Clearly, $(\mu^{(m)})$ is bounded in $W^{1,4}(\Omega)$ and, by the compact
embedding, also in $L^\infty(\Omega)$. This fact, together with the
convergences $(\mu^*)^{(m)}\to \overline\mu^*$ in $L^\infty(\Omega)$ and
$\sigma^{(m)}\to\sigma$, implies that
\begin{align*}
  \big\langle \A [(\mu^*)^{(m)}](\overline\mu) - \A [\overline\mu^*](\overline\mu),
	\mu^{(m)}-\overline\mu\big\rangle &\to 0, \\
	\big\langle f((\mu^*)^{(m)},\sigma^{(m)}) - f(\overline\mu^*,\overline\sigma),
	\mu^{(m)}-\overline\mu\big\rangle &\to 0.
\end{align*}
Consequently, by \eqref{3.aux1},
$$
  \big\langle \A [(\mu^*)^{(m)}](\mu^{(m)}) - \A [(\mu^*)^{(m)}](\overline\mu),
	\mu^{(m)}-\overline\mu\big\rangle \to 0.
$$
The previous monotonicity estimate for $\A $ shows that
$$
  \big\langle \A [(\mu^*)^{(m)}](\mu^{(m)}) - \A [(\mu^*)^{(m)}](\overline\mu),
	\mu^{(m)}-\overline\mu\big\rangle \ge
	\int_\Omega\na(\mu^{(m)}-\overline\mu)^\top (B^*)^{(m)}\na(\mu^{(m)}-\overline\mu)dx.
$$
Then we deduce from the strict positivity of $(B^*)^{(m)}$ 
and the Poincar\'e inequality
that $\mu^{(m)}\to\overline \mu$ strongly in $H^1(\Omega)$. The uniform bound
for $(\mu^{(m)})$ in $W^{1,4}(\Omega)$ implies that $\mu^{(m)}\to\overline\mu$
strongly in $W^{1,q}(\Omega)$ for any $1<q<4$. Take $q\in(3,4)$. Then
the embedding $W^{1,q}(\Omega)\hookrightarrow L^\infty(\Omega)$ is compact,
and, possibly for a subsequence, $\mu^{(m)}\to\overline\mu$ strongly in
$L^\infty(\Omega)$. By the uniqueness of the limit, the convergence holds for
the whole sequence. This shows the continuity of $F$.

We can now apply the fixed-point theorem of Leray-Schauder to conclude the 
existence of a weak solution to \eqref{3.disc}.

\subsection{Uniform estimates} 

Let $\mu^{k}\in X$ be a solution to \eqref{3.disc}.
Employing $\mu_i^k$ as a test function and summing over $i=1,\ldots,n$ gives
$$
  \frac{1}{\tau}\sum_{i=1}^n(c_i^{k}-c_i^{k-1})\mu_i^{k} dx
  + \int_\Omega\left( \na\mu^{k}: B^{k}\na\mu^{k} 
  + \tau|\hc^k\cdot\na\mu^k|^4 + \tau(\na\mu^k: D^k\na\mu^k)^2 \right)dx = 0,
$$
where $c^{k}=\Phi^{-1}(\mu^{k})$.
Since $\mu_i^{k}=\pa \en(c^{k})/\pa c_i$ and $\en(c^{k})$ is convex,
it follows that $\sum_{i=1}^n(c_i^{k}-c_i^{k-1})\mu_i^{k}\ge \en(c^{k})-\en(c^{k-1})$
and therefore,
\begin{align}\label{3.ei}
  \int_\Omega \en(c^{k})dx &+ \int_\Omega\left( \na(\mu^{k}: B^{k}\na\mu^{k} 
  + \tau|\hc^k\cdot\na\mu^k|^4 + \tau(\na\mu^k: D^k\na\mu^k)^2 \right)dx\\
  &\le \int_\Omega \en(c^{k-1})dx.\nonumber
\end{align}
Lemma \ref{lem.B} shows that
\begin{align}\label{3.p}
  \na(\mu^{k})^\top B^{k}\na\mu^{k}
	&\geq k_B\left( (\ctot^k)^2 |\na\mu^k|^2 + |\na(\Pi\mu^k)|^2 \right),\\
  |\hc^k\cdot\na\mu^k|^4 + (\na\mu^k\cdot D^k\na\mu^k)^2 &\geq \frac{1}{2}(k_B')^2 
  |\na\mu^k|^4. \label{3.p.b}
\end{align}

Let $T>0$, $\tau=T/N$ for some $N\in\N$. We introduce the piecewise constant
functions in time $\mu^{(\tau)}(x,t)=\mu^k(x)$ for $x\in\Omega$ and 
$t\in((k-1)\tau,k\tau]$, $k=1,\ldots,N$. The functions $c^{(\tau)}$
and $B^{(\tau)}$ are defined in a similar way. Furthermore, we introduce
the shift operator $\sigma_\tau\mu^{(\tau)}(x,t)=\mu^{k-1}(x)$ for $x\in\Omega$
and $t\in((k-1)\tau,k\tau]$. Then \eqref{3.disc} can be formulated as
\begin{align}\label{3.tau}
  &\frac{c^{(\tau)}-\sigma_\tau c^{(\tau)}}{\tau} 
	= \diver\big(B^{(\tau)}\na\mu^{(\tau)}\big) \\
  &\phantom{x}{}+ \tau\diver\left( |\hc^{(\tau)}\cdot\na\mu^{(\tau)}|^2
	(\hc^{(\tau)}\cdot\na\mu^{(\tau)})\cdot\hc^{(\tau)}
  + (\na\mu^{(\tau)}: D^{(\tau)}\na\mu^{(\tau)}) D^{(\tau)}\na\mu^{(\tau)} 
	\right). \nonumber
\end{align}
Now, we sum \eqref{3.ei} over $k=1,\ldots,N$ and employ \eqref{3.p} and 
\eqref{3.p.b} to obtain
\begin{align}\label{en.in.tau}
  \int_\Omega & \en(c^{(\tau)}(x,T))dx 
  + k_B\int_0^T\int_\Omega\big( (\ctot^{(\tau)})^2|\na\mu^{(\tau)}|^2 
	+ |\na\Pi\mu^{(\tau)}|^2 \big) dx dt\\
  &{}+ \frac{\tau(k_B')^2}{2}\int_0^T\int_\Omega |\na\mu^{(\tau)}|^4 dx dt 
  \leq \int_\Omega \en(c_i^0)dx.\nonumber
\end{align}
In the following, $C>0$ denotes a generic constant independent of $\tau$ and $T$, 
while $C_{T}>0$ denotes a constant depending on $T$ but not on $\tau$.
We deduce from \eqref{en.in.tau} and Poincar\`e's Lemma that
\begin{align}
\label{est.p.mu}
  \|p^{(\tau)}\|_{L^2(0,T; H^1(\Omega))} 
	+ \|\Pi\mu^{(\tau)}\|_{L^2(0,T; H^1(\Omega))} &\leq C,\\
  \label{est.tau.mu}
  \tau^{1/4}\|\mu^{(\tau)}\|_{L^{4}(0,T; W^{1,4}(\Omega))} &\leq C.
\end{align}
By Lemma \ref{lem.Hess}, the matrix $\ctot^{(\tau)}\en''(c^{(\tau)})$ 
is uniformly positive definite. Thus, the uniform bound for
$\ctot\na\mu_i$ in $L^2$ provided by \eqref{en.in.tau} implies a uniform bound for 
$$
  \frac{\pa c^{(\tau)}}{\pa x_j}
	= (\ctot^{(\tau)}\textbf{}\en'')^{-1}\ctot^{(\tau)}\frac{\pa\mu^{(\tau)}}{\pa x_j}
$$ 
for all $j=1,\ldots,n$ in $L^2(Q_T)$, where $Q_T=\Omega\times(0,T)$. 
Therefore, since $\dom$ is bounded and $c^{(\tau)}(x,t)\in\dom$,
\begin{equation}\label{est.c}
  \|c_i^{(\tau)}\|_{L^\infty(Q_T)}
	+ \|c_i^{(\tau)}\|_{L^2(0,T;H^1(\Omega))} 
	\le C, \quad i=1,\ldots,n.
\end{equation}
In particular, $B^{(\tau)}$ is uniformly
bounded in $L^\infty(Q_T)$. Using these estimates in \eqref{3.tau} shows that
\begin{align} \label{est.dc}
  \tau^{-1} \|c_i^{(\tau)}-\sigma_\tau c_i^{(\tau)}\|_{L^{4/3}(0,T;W^{1,4}(\Omega)')}
  &\leq C\big(\|\nabla p\|_{L^{4/3}(Q_T)} + \|\nabla\Pi\mu\|_{L^{4/3}(Q_T)} \\ 
	&\phantom{xx}{}+ \tau\|\nabla\mu\|_{L^{4}(Q_T)}^3\big) 
	\leq C. \nonumber
\end{align}

\subsection{The limit $\tau\to 0$} 

In view of estimates \eqref{est.c} and \eqref{est.dc}, we can apply the
Aubin-Lions lemma in the version of \cite{DrJu12}, ensuring the existence of 
a subsequence, which is not relabeled, such that, as $\tau\to 0$,
$$
  c_i^{(\tau)}\to c_i \quad\mbox{strongly in }L^4(Q_T),\
	i=1,\ldots,n.
$$
In fact, in view of the $L^\infty(Q_T)$ bound \eqref{est.c}, this convergence 
holds in $L^q(Q_T)$ for any $q<\infty$. Furthermore, we have
$$
  \tau^{-1}(c_i^{(\tau)}-\sigma_\tau c_i^{(\tau)})
	\rightharpoonup \pa_t c_i \quad\mbox{weakly in }L^{4/3}(0,T;W^{1,4}(\Omega)'),\
	i=1,\ldots,n.
$$

It holds that $c(x,t)\in\overline\dom$ for a.e.\ $(x,t)\in Q_T$. 
Let $\mu:=\Phi(c), p=p(c)\in(\R\cup\{\pm\infty\})^n$.
By \eqref{est.p.mu}, \eqref{est.c}, and Fatou's lemma, we infer that, for
a subsequence,
\begin{align}
  \|p\|_{L^2(Q_T)} &\le \liminf_{\tau\to 0}\|p^{(\tau)}\|_{L^2(Q_T)} \le C, 
	\nonumber\\
  \|\Pi\mu\|_{L^2(Q_T)} &\le \liminf_{\tau\to 0}\|\Pi\mu^{(\tau)}\|_{L^2(Q_T)}
  \le C,\label{lim.Pimu}
\end{align}
which implies that $|p|$, $|\Pi\mu|<\infty$ a.e.\ in $Q_T$. The fact that 
$p<\infty$ a.e. in $Q_T$ implies that $\sum_{i=1}^n b_i c_i < 1$ a.e. in $Q_T$. 
This property and the relation $|\Pi\mu|<\infty$ a.e. in $Q_T$ imply that
$$ 
  \gamma_i^{(\tau)} := \log c_i^{(\tau)} - \frac{1}{n}\sum_{j=1}^n
	\log c_j^{(\tau)}
$$
is a.e.\ convergent as $\tau\to 0$ for $i=1,\ldots,n$.
Let $(x,t)\in Q_T$ be such that $\gamma_i^{(\tau)}(x,t)$ is convergent for 
$i=1,\ldots,n$ and let 
$$
  J = \Big\{i\in\{1,\ldots,n\}: \lim_{\tau\to 0}c_i^{(\tau)}(x,t) = 0\Big\}.
$$
We want to show that either $J=\emptyset$
or $J = \{1,\ldots,n\}$. Let us assume by contradition that $0<|J|<n$ 
(here $|J|$ is the number of elements in $J$). It follows that
$$ 
  \sum_{i\in J}\gamma_i^{(\tau)}(x,t) 
	= \bigg(1 - \frac{|J|}{n}\bigg)\sum_{i\in J}\log c_i^{(\tau)}(x,t) 
	- \frac{|J|}{n}\sum_{i\notin J}\log c_i^{(\tau)}(x,t) . 
$$
Since $0<|J|<n$, the first sum on the right-hand side diverges to $-\infty$, 
while the second sum is convergent. So the right-hand
side of the above equality is divergent, while the left-hand side is convergent, 
by assumption. This is a contradiction. Thus either the set $J$ is empty or it equals
$\{1,\ldots,n\}$, i.e.\ for a.e. $(x,t)\in Q_T$, either $c_i(x,t)>0$ for 
$i=1,\ldots,n$, or $\ctot(x,t)= 0$. Summarizing up, $c\in\dom\cup\{0\}$.

It follows from \eqref{est.p.mu}--\eqref{est.dc} that 
$\xi\in L^2(Q_T)^n$ exists such that
\begin{align*}
  \na(\Pi\mu^{(\tau)})\rightharpoonup \xi &\quad\mbox{weakly in }L^2(Q_T), \\
  \na p^{(\tau)}\rightharpoonup \na p &\quad\mbox{weakly in }L^2(Q_T), \\
  \tau|\na\mu^{(\tau)}|^3 \to 0 &\quad\mbox{strongly in }L^{4/3}(Q_T), \\
  D(c^{(\tau)})\to D(c) &\quad\mbox{strongly in }L^q(Q_T;\R^{n\times n}),\ q<\infty .
\end{align*}
Moreover, since $c\in\dom\cup\{0\}$, we infer that 
$\xi = \na(\Pi\mu)$ on $\{\ctot>0\}$.
These convergences allow us to perform the limit $\tau\to 0$ in \eqref{3.tau}, obtaining
\begin{equation}
  \pa_t c = \diver(c\na p + D(c)\xi)\qquad\mbox{in }Q_T.\label{eq.xi}
\end{equation}

We will now show that $\ctot>0$ a.e.\ in $Q_{T}$. Then this implies that 
$\xi = \na(\Pi\mu)$ a.e.\ in $Q_T$ and so $D(c)\xi = D(c)\na\mu$, 
since $D(c)\ell = 0$. To this end, summing up the components in \eqref{eq.xi} yields 
(remember that $\sum_{i=1}^{n}D_{ij}= 0$ in $\dom$)
\begin{equation}\label{ctot.tau}
  \pa_t\ctot = \diver\left(\ctot\na p\right) \quad\mbox{in }\Omega.
\end{equation}
Let $\delta>0$. We employ the test function $1/(\delta+\ctot^\Gamma)-1/(\delta+\ctot)$
in \eqref{ctot.tau} giving
$$
  \frac{d}{dt} \int_{\Omega}\left( \frac{\ctot}{\delta+\ctot^\Gamma} 
	+ \log\frac{1}{\delta+\ctot} \right)dx 
  = -\int_{\Omega}\frac{\ctot}{(\delta+\ctot)^2}\na\ctot\cdot\na p dx.
$$
An integration in time in the interval $[0,t]$ (for some $t\in[0,T]$) yields
\begin{align*}
  \int_{\Omega}\log\frac{\delta+\ctot^{0}(x)}{\delta+\ctot(x,t)} dx 
  &+  \int_{\Omega}\frac{\ctot(x,t) - \ctot(x,0)}{\delta+\ctot^\Gamma} dx\\
  &= -\int_0^t\int_{\Omega}\frac{\ctot}{(\delta+\ctot)^2}\na\ctot\cdot\na p dx ds.
\end{align*}
Since the function inside the integral on the right-hand side 
vanishes in the region $\ctot=0$, we can rewrite the above equation as
\begin{align}\label{log.ctot.int}
  \int_{\Omega}\log\frac{\delta+\ctot^{0}(x)}{\delta+\ctot(x,t)} dx 
  & +  \int_{\Omega}\frac{\ctot(x,t) - \ctot(x,0)}{\delta+\ctot^\Gamma} dx\\
  & = -\int_0^t\int_{\{\ctot>0\}}
	\frac{\ctot^{2}}{(\delta+\ctot)^2}\na\log\ctot\cdot\na p dx ds. \nonumber
\end{align}

We want to show that the integral on the right-hand side is bounded from above 
by a constant that depends on $T$ but not on $\delta$.
We show first that $\na\log(p/\ctot)\in L^2 (0,\infty;L^2(\Omega))$.
First, we observe that, because of \eqref{1.kappa},
\begin{align*}
  p &\ge \ctot\bigg(1-\sum_{i,j=1}^n b_i^{-1}a_{ij}b_ic_i\frac{c_j}{\ctot}\bigg)
	= \ctot\bigg(1 - \max_{i,j=1,\ldots,n}(b_i^{-1}a_{ij})\sum_{k=1}^n b_kc_k
	\sum_{\ell=1}^n \frac{c_\ell}{\ctot}\bigg) \\
	&\ge \ctot\left(1-\max_{i,j=1,\ldots,n}(b_i^{-1}a_{ij})\right) = \ctot K.
\end{align*}
This implies that $\ctot\na\log(p/\ctot) = (\ctot/p)\na p - \na\ctot\in 
L^{2} (0,\infty; L^{2}(\Omega))$.
Let $\eta=1/(2\max_{1\leq i\leq n}b_{i})$. We decompose
$$
  |\na\log(p/\ctot)| 
	= |\na\log(p/\ctot)|\chi_{\{\ctot>\eta\}} 
	+ |\na\log(p/\ctot)|\chi_{\{\ctot\leq\eta\}}.
$$
The first term on the right-hand side is bounded in $L^{2} (0,\infty; L^{2}(\Omega))$.
The same holds true for the second term since $1-b\cdot c\geq 1/2$ for $\ctot\leq\eta$  
and $\pa(p/\ctot)/\pa c_i$ is uniformly bounded in $\{\ctot\le \eta\}$. We infer
that $\na\log(p/\ctot)\in L^{2} (0,\infty; L^{2}(\Omega))$, showing the claim.

The right-hand side of \eqref{log.ctot.int} becomes
\begin{align*}
  - & \int_0^t \int_{\{\ctot>0\}}\frac{\ctot^{2}}{(\delta+\ctot)^2}
  \na\log\ctot\cdot\na p\ dx ds \\
  &= -\int_0^t\int_{\{\ctot>0\}}\frac{\ctot^{2}}{(\delta+\ctot)^2}\left( 
	\na\log p - \na\log\frac{p}{\ctot} \right)\cdot\na p\ dx ds \\
  &=-4\int_0^t\int_{\{\ctot>0\}}
	\frac{\ctot^{2}|\na\sqrt{p}|^{2}}{(\delta+\ctot)^2}dxds
  + \int_0^t\int_{\{\ctot>0\}}\frac{\ctot^{2}}{(\delta+\ctot)^2}\na\log\frac{p}{\ctot}
	\cdot\na p\ dx ds \\
	&\leq C.
\end{align*}
Identity \eqref{log.ctot.int}, the bound for $\ctot$, 
and the above estimate imply that 
$$
  \int_\Omega\log\frac{\delta+\ctot(x,0)}{\delta+\ctot(x,t)}dx
  +\int_0^t\int_{\{\ctot>0\}}\frac{4\ctot^{2}}{(\delta+\ctot)^2}
	|\na\sqrt{p}|^{2}dx ds \leq C\quad\mbox{for }\delta>0,\ t>0. 
$$
Taking the limit inferior $\delta\to 0$ on both sides and applying Fatou's lemma,
we obtain
$$
  \int_\Omega\log\frac{\ctot^0}{\ctot(x,t)}dx
  +4\int_0^t\int_{\{\ctot>0\}}|\na\sqrt{p}|^{2}dx ds \leq C,\quad t>0, 
$$
which implies that $\ctot(x,t)>0$ for a.e.\ $x\in\Omega$, $t>0$, and 
$\na\sqrt{p}\in L^{2} (0,\infty; L^{2}(\Omega))$.

As a consequence, $c$ is a weak solution to \eqref{1.c}-\eqref{1.bic}.
Actually, equation \eqref{1.c} is satisfied for test functions in
$L^{4}(0,T;$ $W^{1,4}(\Omega))$ but a density argument shows that the
equation holds in $L^2(0,T;H^1(\Omega))$. 

Next, we show that $\en(c^{(\tau)})\to\en(c)$ strongly in $L^{q}(Q_{T})$
for any $q<2$. Since $c^{(\tau)}\to c$ a.e. in $Q_{T}$ and $c^{(\tau)}$
is uniformly bounded, it suffices to show that the term 
$\ctot^{(\tau)}\log(1-\sum_{i=1}^{n}b_{i}c_{i}^{(\tau)})$ is strongly convergent
(see \eqref{2.H}). This is a consequence of the fact that both
$$
  \ctot^{(\tau)}\bigg|\log\bigg(1-\sum_{i=1}^{n}b_{i}c_{i}^{(\tau)}\bigg)\bigg|
	\leq \frac{\ctot^{(\tau)}}{1-\sum_{i=1}^{n}b_{i}c_{i}^{(\tau)}}
  = p^{(\tau)} + \sum_{i,j=1}^{n}a_{ij}c_{i}^{(\tau)}c_{j}^{(\tau)}
$$ 
and $p^{(\tau)}$ are uniformly bounded in $L^{2}(Q_{T})$. 
The convergence of $(\en(c^{(\tau)})$, together with Fatou's lemma, then
allows us to take the limit $\tau\to 0$ in \eqref{3.ei} and to obtain \eqref{1.ei}.

We point out that, since all the constants $C$ appearing 
in the previous estimates are independent of the
final time $T$, all the bounds that have been found hold true in the time 
interval $(0,\infty)$.

We conclude the existence proof by showing that 
$\log\ctot\in L^{\infty}(0,\infty; L^{2}(\Omega))$.
We use the test function $\Theta_{\delta}(\ctot)-\Theta_{\delta}(\ctot^{\Gamma})$
in \eqref{ctot.tau}, where
$$ 
  \Theta_{\delta}(u) := \frac{1}{u+\delta}\log\left( \frac{u+\delta}{M + \delta} 
	\right),\quad M = \frac{1}{\min_{1\leq i\leq n}b_{i}}.
$$
Notice that $\ctot\leq M$ a.e.\ in $\Omega$, $t>0$. It follows that
\begin{align*}
  \frac{1}{2} & \int_{\Omega}\left|\log\left( \frac{\ctot(x,t)+\delta}{M + \delta} 
	\right)\right|^{2}dx 
  - \frac{1}{2}\int_{\Omega}\left|\log\left( \frac{\ctot(x,0)+\delta}{M + \delta} 
	\right)\right|^{2}dx\\
  &\phantom{xx}{} - \Theta_\delta(\ctot^\Gamma)
	\int_\Omega(\ctot(x,t)-\ctot(x,0))dx\\
  &= -\int_{0}^{t}\int_{\Omega}\frac{\ctot}{\ctot+\delta}
	\left( 1 - \log\left( \frac{\ctot+\delta}{M + \delta} \right) \right)
  \na p\cdot\na\log\ctot dx ds.
\end{align*}
Inserting $\na\log\ctot = \na\log p - \na\log(p/\ctot)$ on the right-hand side,
the first term is nonpositive (because of $\ctot\le M$, we have
$1-\log((\ctot+\delta)/(M+\delta))\ge 0$) and we end up with
\begin{align*}
  \frac{1}{2} & \int_{\Omega}\left|\log\left( \frac{\ctot(x,t)+\delta}{M + \delta} 
	\right)\right|^{2}dx 
  - \frac{1}{2}\int_{\Omega}\left|\log\left( \frac{\ctot^{0}(x)+\delta}{M + \delta} 
	\right)\right|^{2}dx\\
  &\leq C + \int_{0}^{t}\int_{\Omega}\frac{\ctot}{\ctot+\delta}
	\left( 1 - \log\left( \frac{\ctot+\delta}{M + \delta} \right) \right)
  \na p\cdot\na\log\frac{p}{\ctot} dx ds = C + I_{1} + I_{2},
\end{align*}
where the constant $C>0$ estimates the term proportional to 
$\Theta_\delta(\ctot^\Gamma)$ and
\begin{align*}
  I_{1} &:= \int_0^t\int_{\{\ctot\le\eta\}}
	\frac{2\ctot\sqrt{p}}{\ctot+\delta}
	\left( 1 - \log\left( \frac{\ctot+\delta}{M + \delta} \right) \right)
  \na\sqrt{p}\cdot\na\log\frac{p}{\ctot} dx ds,\\
  I_{2} &:= \int_0^t\int_{\{\ctot>\eta\}}\frac{\ctot}{\ctot+\delta}
	\left( 1 - \log\left( \frac{\ctot+\delta}{M + \delta} \right) \right)
  \na p\cdot\na\log\frac{p}{\ctot} dx ds,
\end{align*}
and $\eta=1/(2\min_{1\leq i\leq n}b_{i})$.

It is straightforward to see that $\sqrt{p}\log((\ctot+\delta)/(M + \delta))$ 
is uniformly bounded with respect to $\delta$ in the region $\{\ctot\le\eta\}$.
Since $\na\sqrt p \in L^{2}(0,\infty; L^{2}(\Omega))$, we deduce that $I_{1}$ 
is uniformly bounded with respect to $\delta$. Furthermore, the regularity
$\na p \in L^{2}(0,\infty; L^{2}(\Omega))$ implies that $I_{2}$ is uniformly 
bounded with respect to $\delta$. As a consequence, 
$$
  \int_{\Omega}\left|\log\left( \frac{\ctot+\delta}{M + \delta} \right)\right|^{2}dx 
	\leq C,\quad t>0.
$$
Taking the limit inferior $\delta\to 0$ on both sides of the above inequality 
and applying Fatou's Lemma, we conclude that 
$\log\ctot\in L^{\infty}(0,\infty; L^{2}(\Omega))$.
This finishes the proof of part (i).


%


\subsection{Large-time asymptotics}

We first show that, for some generic constant $C>0$,
\begin{equation}
  |(\ctot\en'')^{-1}\mu| \leq C\left(1 + p + |\log\ctot| \right).\label{est.lt}
\end{equation}
Let $w := (\ctot\en'')^{-1}\mu$, i.e. $\ctot\en'' w = \mu$. It follows from 
Lemma \ref{lem.Hess} that
\begin{align*}
  \ctot\sum_{i=1}^{n}\frac{w_{i}^{2}}{c_{i}}
	&\leq \frac{1}{\kappa} w\cdot(\ctot\en'')w = \frac{1}{\kappa}\mu\cdot w 
  = \frac{1}{\kappa}\sum_{i=1}^{n}\frac{\sqrt{c_{i}}}{\sqrt{\ctot}}\mu_{i} 
	\frac{\sqrt\ctot}{\sqrt{c_{i}}}w_{i}\\
  &\leq \frac{1}{\kappa}\bigg( \sum_{j=1}^{n}
	\frac{\sqrt{c_{j}}}{\sqrt{\ctot}}|\mu_{j}|\bigg) 
	\bigg(\ctot \sum_{i=1}^{n}\frac{w_{i}^{2}}{c_{i}} \bigg)^{1/2}.
\end{align*}
This gives
\begin{equation}
  |w|\leq \bigg( \ctot\sum_{i=1}^{n}\frac{w_{i}^{2}}{c_{i}} \bigg)^{1/2}
	\leq \frac{1}{\kappa}\sum_{j=1}^{n}
	\frac{\sqrt{c_{j}}}{\sqrt\ctot}|\mu_{j}|. \label{est.w}
\end{equation}
It remains to estimate the right-hand side. 
We claim that $0\le -\log(1-b\cdot c)\le C(1+p)$. Indeed, with
$\eta=1/(2\max_{i=1,\ldots,n}b_i)$, we have
$$
  -\log(1-b\cdot c)
	\le \chi_{\{\ctot\le\eta\}}\log\frac{1}{1-b\cdot c}
	+ \frac{\ctot}{\eta}\chi_{\{\ctot>\eta\}}\log\frac{1}{1-b\cdot c}.
$$
The first term on the right-hand side is bounded since $\ctot\le\eta$ implies
that $1-b\cdot c\ge 1/2$. Then, since $\log(1/z)\le 1/z$ for $z>0$,
$$
  0\le -\log(1-b\cdot c)\le C + 2\max_{i=1,\ldots,n}b_i\frac{\ctot}{1-b\cdot c}
	\le C(1+p).
$$
Hence, by definition \eqref{1.mu} of $\mu_i$, 
\begin{align*}
  \frac{\sqrt{c_{i}}}{\sqrt\ctot}|\mu_{i}| 
	&\leq C(1+p) + \frac{\sqrt{c_{i}}}{\sqrt\ctot}|\log c_{i}| \\
  &\leq C(1+p) + 2 \frac{\sqrt{c_{i}}}{\sqrt\ctot}\left|
	\log\frac{\sqrt{c_{i}}}{\sqrt\ctot}\right| 
	+ \frac{\sqrt{c_{i}}}{\sqrt\ctot}|\log\ctot|,
\end{align*}
and therefore,
\begin{align}
& \frac{\sqrt{c_{i}}}{\sqrt\ctot}|\mu_{i}| \leq C(1+ p + |\log\ctot|).
\label{est.lt.mu}
\end{align}
Putting together \eqref{est.w} and \eqref{est.lt.mu} yields \eqref{est.lt}.

A computation shows that $\en(c)=-p(c)+\sum_{i=1}^n c_i\mu_i$
(in fact, this is the Gibbs-Duhem relation, see \eqref{m.gde}) and
$\na\en(c)=c\cdot\na\mu$ (this follows from \eqref{2.nap}).
Since $c^\Gamma=\Phi^{-1}(\mu)|_{\mu=0}$, we have $\en(c^\Gamma)=-p(c^\Gamma)$.
We use the fact that $c_i$ varies in a bounded domain and employ
the Poincar\'e inequality with constant $C_P$
and the identity $\na\mu=\en''(c)\na c$ to find that
\begin{align*}
  \int_\Omega \en^*(c)dx 
	&\leq C_{P}\int_\Omega |\na\en(c)| dx 
  = C_{P}\int_{\Omega} |\mu\cdot\na c| dx \\
	&= C_{P}\int_{\Omega}\sum_{i,j=1}^n\big|\ctot((\en'')^{-1})_{ij}\mu_i\ctot \na\mu_j
  \big|dx	\\
  &\leq C_P\|(\ctot\en'')^{-1}\mu\|_{L^{2}(\Omega)}\|\ctot\na\mu\|_{L^{2}(\Omega)},
\end{align*}
which, thanks to \eqref{est.lt}, leads to
$$
  \bigg( \int_\Omega \en^*(c)dx \bigg)^{2}
	\leq C\big(1 + \|p\|_{L^{2}(\Omega)}^{2} + \|\log\ctot\|_{L^{2}(\Omega)}^{2} \big)
	\|\ctot\na\mu\|_{L^{2}(\Omega)}^{2}.
$$
Taking into account \eqref{1.ei} and Lemma \ref{lem.B}, we obtain
$$ 
  \|\ctot\na\mu\|_{L^{2}(\Omega)}^{2} 
	\le C\int_\Omega\na\mu:B(c)\na\mu dx
	= C\bigg( -\frac{d}{dt}\int_\Omega \en^*(c)dx \bigg). 
$$
We deduce from the above inequalities and the facts that 
$p\in L^{2}(0,T; L^{2}(\Omega))$ and $\log\ctot\in L^{\infty}(0,T; L^{2}(\Omega))$,
$$
  \bigg( \int_\Omega \en^*(c)dx \bigg)^{2}
	\leq C(\Psi(t) + 1)\bigg( -\frac{d}{dt}\int_\Omega \en^*(c)dx \bigg),
$$
where $\Psi = \|p\|_{L^{2}(\Omega)}^{2}\in L^{1}(0,\infty)$. 
A nonlinear Gronwall inequality shows that
\begin{equation}\label{lt.S}
  \int_\Omega \en^*(c)dx \leq \frac{S_{0}}{1 + C S_{0}\psi(t)},\quad t>0,
\end{equation}
where $S_{0} := \int_\Omega \en^*(c^{0})dx$ and 
$\psi(t) := \int_{0}^{t}(1 + \Psi(\tau))^{-1}d\tau$.

We define now $f : (0,1]\to\R$, $f(x) = 1/x - 1$. Clearly, $f$ is decreasing 
and convex. Jensen's inequality and the fact that $\Phi\in L^1(0,\infty)$ yield
$$
  f\bigg(\frac{\psi(t)}{t}\bigg)
	\leq \frac{1}{t}\int_{0}^{t}f\left( (1 + \Psi(\tau))^{-1} \right)d\tau 
	= \frac{1}{t}\int_{0}^{t}\Psi(\tau)d\tau
	\leq \frac{C}{t}.
$$
Since $f$ (and also its inverse $f^{-1}$) is decreasing, it follows that
$$ 
  \frac{\psi(t)}{t}
	\geq f^{-1}\left(\frac{C}{t} \right)
	= \frac{1}{1 + C t^{-1}} 
	\geq \frac{1}{2} \qquad\mbox{for }t\ge C. 
$$
We conclude from this fact and \eqref{lt.S} that
\begin{equation}\label{lt.S.2}
  \int_\Omega \en^*(c)dx \leq \frac{C}{1+t},\quad t>0.
\end{equation}
By Lemma \ref{lem.Hess}, the Hessian
$\en''$ is positive definite. Moreover, $\en'(c^\Gamma)=\mu|_{c=c^\Gamma}=0$.
Thus, a Taylor expansion shows that
$$
  \int_\Omega F^*(c)dx = \int_\Omega\bigg(\en'(c^\Gamma)\cdot(c-c^\Gamma)
	+ \frac12(c-c^\Gamma):\en''(\xi)(c-c^\Gamma)\bigg)dx
	\ge \frac{\kappa}{2}\int_\Omega|c-c^\Gamma|^2 dx,
$$
where $\kappa>0$ is specified in \eqref{1.kappa}. This finishes the proof of 
Theorem \ref{thm.ex1}.


\subsection{Proof of Corollary \ref{coro.ex}}

The existence proof is similar to that one of Theorem \ref{thm.ex1}. 
The main difference is that we lose the information
on the chemical potentials $\mu_{1},\ldots,\mu_{n}$ due to the possible 
degeneracy of $D$ (since $\en''$ is unbounded). 
However, thanks to the simple structure of \eqref{1.c.simple},
we do not need uniform estimates on $\mu_{1},\ldots,\mu_{n}$ in order to be able
to pass to the deregularization limit. 

Compared to \eqref{3.disc}, we employ a slightly different time 
discretization to overcome the difficulty that $D$ is not strictly positive definite:
\begin{equation}\label{coro.3.disc}
  \frac{c_i^{k}-c_i^{k-1}}{\tau}
	= \diver\bigg(\sum_{j=1}^n B_{ij}^{k}\na\mu_j^{k}\bigg)
	+ \tau\diver\big( |\na\mu_i^k|^{2} \na\mu_i^k \big) \quad\mbox{in }\Omega,\ 
	i=1,\ldots,n.
\end{equation}
The uniform estimates for $p^{k}$, $c^{k}$ provided by \eqref{est.p.mu}, \eqref{est.c},
repsectively, still hold. Lemma \ref{lem.Hess} allows us to infer 
that $\na\mu^{k}\cdot\na c^{k} = \na c^{k}\cdot(\en^{k})''\na c^{k} 
\geq \kappa |\na\sqrt{c^{k}}|^{2}$. The limit mass densities $c_{1},\ldots,c_{n}$
satisfy $c\in\overline{\dom}$. The proof that $c\in\dom$ is slightly different 
than in the proof of Theorem \ref{thm.ex1}. Indeed, 
the $L^{\infty}(0,\infty; L^{1}(\Omega))$ bound for $\en$ implies that 
$\sum_{i=1}^{n}b_{i}c_{i}<1$ a.e. in $\Omega$, $t>0$. 
This fact and the previous bounds allow us
to take the limit $\tau\to 0$ in \eqref{coro.3.disc} and to obtain 
\eqref{1.c.simple} together with the properties
$$
  c_i-c_i^\Gamma \in L^2(0,\infty ;H^1(\Omega))\cap 
	H^1(0,\infty ;H^{1}(\Omega)'), \quad\na\sqrt{c_i},\ \na p 
	\in L^{2}(0,\infty; L^{2}(\Omega)).
$$

In order to prove that $c_{i}>0$ a.e.\ in $\Omega$, $t>0$, for $i=1,\ldots,n$,
we choose $\delta>0$, employ the test function
$(\delta+c_{i}^\Gamma)^{-1}-(\delta+c_{i})^{-1}$ in \eqref{1.c.simple}, 
and sum over $i=1,\ldots,n$:
\begin{align*}
  \frac{d}{dt}\int_{\Omega} & \sum_{i=1}^{n}
	\left(\frac{c_i}{\delta+c_i^\Gamma} - \log(\delta + c_{i}) \right) dx\\
  &= -\sum_{i}\int_{\Omega}\left(\frac{(1+\eps\beta)c_{i}}{c_{i}+\delta}
	\na p\cdot\na\log(c_{i}+\delta) + \alpha|\na\log(c_{i}+\delta)|^{2}\right)dx. 
\end{align*}
Since $\alpha$ is strictly positive, $\beta$ is bounded, and 
$\na p \in L^{2}(0,\infty; L^{2}(\Omega))$, by applying Young's inequality 
and integrating in time, we conclude that
$$
  \|\log(\delta + c_{i})\|_{L^{\infty}(0,\infty; L^{1}(\Omega))} 
	+ \|\na\log(\delta+c_{i})\|_{L^{2}(0,\infty; L^{2}(\Omega))}
	\leq C,\quad i=1,\ldots, n. 
$$
Fatou's Lemma allows us to conclude that 
$\log c_{i}\in L^{\infty}(0,\infty; L^{1}(\Omega))$ and
$\na\log(\delta+c_{i})\in L^{2}(0,\infty; L^{2}(\Omega))$
for $i=1,\ldots, n$; in particular $c_{i}>0$ a.e.\ in $\Omega$, $t>0$. 
The free energy inequality \eqref{1.ei} follows with the same argument
as in the proof of Theorem \ref{thm.ex1}. This finishes the proof of Corollary
\ref{coro.ex}.


\section{Proof of Theorem \ref{coro}}\label{sec.coro}


\subsection{Integral inequality} 

Let $\delta>0$ be arbitrary, and let 
$z=(z_{1},\ldots,z_{n-1})$, $z_{i}=c_{i}/\ctot$, 
$z^{\delta}=(z_{1}^{\delta},\ldots,z_{n-1}^{\delta})$, 
$z_{i}^{\delta}=(\delta+c_{i})/(\delta+\ctot)$ for $i=1,\ldots,n-1$.
Moreover, let $\psi_{\delta}(c) = (\ctot + {\delta})f(z^{\delta})$ for $c\in\dom$, 
where $f : [0,1]^{n-1}\to\R$ satisfies the assumptions of Theorem \ref{coro}.
A simple computation yields
\begin{align*}
  \frac{\pa z_k^\delta}{\pa c_i} 
	&= \frac{\delta_{ik}}{\ctot+\delta} - \frac{c_k+\delta}{(\ctot+\delta)^2},\\
  \frac{\pa^{2}\psi_{\delta}(c)}{\pa c_{i}\pa c_{j}} 
  &= (\ctot+\delta)\sum_{k,s=1}^{n-1}\frac{\pa^{2} f}{\pa z_{k}\pa z_{s}}
	\frac{\pa z_{k}^{\delta}}{\pa c_{i}}\frac{\pa z_{s}^{\delta}}{\pa c_{j}},
	\quad i,j=1,\ldots,n.
\end{align*}
Employing $\pa\psi_{\delta}(c)/\pa c_{i} - \pa\psi_{\delta}(c^{\Gamma})/\pa c_{i}$ 
as a test function in \eqref{1.c.simple} leads to
\begin{equation}\label{dpsi}
  \int_{\Omega}\left( \psi_{\delta}(c(x,t)) - \psi_{\delta}(c(x,0))\right) dx 
  -\sum_{i=1}^{n}\frac{\pa\psi_{\delta}}{\pa c_{i}}(c^{\Gamma})
	\int_{\Omega}(c_{i}(x,t)-c_{i}(x,0)) dx = -J_1-J_2,
\end{equation}
where
\begin{align*}
  J_{1} &= \sum_{i,j=1}^{n}\int_{0}^{t}\int_{\Omega}(1+\eps\beta(c))\na c_{j}
	\cdot\na p (\ctot+\delta)\sum_{k,s=1}^{n-1}\frac{\pa^{2} f}{\pa z_{k}\pa z_{s}}
	\frac{\pa z_{k}^{\delta}}{\pa c_{i}}\frac{\pa z_{s}^{\delta}}{\pa c_{j}}
  c_{i} dx ds,\\
  J_{2} &= \eps\sum_{i,j=1}^{n}\int_{0}^{t}\int_{\Omega}\alpha(c)\na c_{j}
	\cdot\na c_i(\ctot+\delta)\sum_{k,s=1}^{n-1}\frac{\pa^{2} f}{\pa z_{k}\pa z_{s}}
	\frac{\pa z_{k}^{\delta}}{\pa c_{i}}\frac{\pa z_{s}^{\delta}}{\pa c_{j}} dx ds.
\end{align*}
It holds that
$$ 
  J_{2} = \eps\int_{0}^{t}\int_{\Omega}\alpha(c)(\ctot+\delta)
  \sum_{k,s=1}^{n-1}\frac{\pa^{2} f}{\pa z_{k}\pa z_{s}}\na z_{k}^{\delta}
	\cdot \na z_{s}^{\delta}\ dx ds, 
$$
and so $J_{2}\geq 0$, since $f$ is convex. 
We show now that $|J_{1}|\to 0$ as $\delta\to 0$. We compute
\begin{align*}
  \sum_{i=1}^n\frac{\pa z_k^\delta}{\pa c_i}c_i 
	= \frac{\delta(c_k-\ctot)}{(\ctot+\delta)^2}\to 0
	& \quad\mbox{a.e. in }\Omega\times(0,\infty)\mbox{ as }\delta\to 0,\\
  (\ctot+\delta)\left|\frac{\pa z_k^\delta}{\pa c_j}\right| 
	+ \left|\sum_{i=1}^n\frac{\pa z_k^\delta}{\pa c_i}c_i\right|
	\leq C &\quad\mbox{a.e. in }\Omega\times(0,\infty),~~ j=1,\ldots,n.
\end{align*}
The above relations, together with the boundedness of $\beta$ and $f''$, 
allow us to apply the dominated convergence theorem and deduce that
$|J_{1}|\to 0$ as $\delta\to 0$. Moreover, \eqref{1.f} implies that 
$\pa\psi_{\delta}(c^\Gamma)/\pa c_{i}\to 0$ as $\delta\to 0$,
$i=1,\ldots,n$. The continuity and boundedness of $f$ imply that 
$\psi_{\delta}(c(\cdot,0))\to \ctot^0 f(c_1^0/\ctot^0,\ldots,c_{n-1}^0,\ctot^0)$
in $L^1(\Omega)$ as $\delta\to 0$. Taking the limit inferior $\delta\to 0$ 
on both sides of \eqref{dpsi} and exploiting all the convergence relations
as well as the nonnegativity of $J_2$ yield
$$ 
  \liminf_{\delta\to 0}\int_\Omega\psi_\delta(c(x,t)) dx 
  \leq \int_\Omega \ctot^0 f\bigg(\frac{c_1^0}{\ctot^0},\ldots,
	\frac{c_{n-1}^0}{\ctot^0}\bigg)dx. 
$$
Finally, by Fatou's Lemma, we conclude that \eqref{1.ii2} holds.


\subsection{Maximum principle} 

The final statement of Theorem \ref{coro}
is a consequence of the following lemma.

\begin{lemma}\label{lem.max}
Let $c_i$, $c_i^0\in L^\infty(\Omega)$ for $i=1,\ldots,n$ 
be positive functions such that
$c_i=c_i^0=c_i^\Gamma$ on $\pa\Omega$ for some constant $c_i^\Gamma>0$, $i=1,\ldots,n$. 
Let a constant $m\in(0,c_1^\Gamma/\ctot^\Gamma)$ exist such that 
$c_1^0/\ctot^0\ge m$
in $\Omega$. Finally, assume that \eqref{1.ii2} holds for any $f\in C^2(0,1)$ 
satisfying \eqref{1.f}. Then $c_1/\ctot\ge m$ in $\Omega$.
\end{lemma}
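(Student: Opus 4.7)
The plan is to apply the assumed integral inequality \eqref{1.ii2} to a carefully chosen convex test function $f$ that detects violations of the bound $c_{1}/\ctot\ge m$. If $f\ge 0$ is admissible and vanishes exactly on $\{z\ge m\}$, then because the initial datum satisfies $c_1^0/\ctot^0\ge m$, the right-hand side of \eqref{1.ii2} collapses to zero, which forces $\ctot\cdot f(c_1/\ctot)=0$ a.e.\ and hence the desired bound.

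Concretely, I would take
$$f(z) := ((m-z)_+)^3, \quad z\in(0,1).$$
A direct computation gives $f'(z)=-3(m-z)_+^2$ and $f''(z)=6(m-z)_+\ge 0$, so $f\in C^2(0,1)$ and is convex; raising the positive part to the third (rather than second) power is precisely what buys $C^2$ regularity across the corner at $z=m$, while keeping convexity automatic. Since $m<c_1^\Gamma/\ctot^\Gamma$ by hypothesis, $f$ vanishes identically in a neighborhood of $c_1^\Gamma/\ctot^\Gamma$, which immediately yields both normalization conditions in \eqref{1.f}:
$$f(c_1^\Gamma/\ctot^\Gamma)=0,\qquad f'(c_1^\Gamma/\ctot^\Gamma)=0.$$

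Applying \eqref{1.ii2} with this choice, the right-hand side vanishes because the hypothesis $c_1^0/\ctot^0\ge m$ pointwise gives $(m-c_1^0/\ctot^0)_+=0$ a.e.\ in $\Omega$. We are left with
$$\int_\Omega \ctot\bigg(m-\frac{c_1}{\ctot}\bigg)_+^3 dx \le 0.$$
Since $c_i>0$ implies $\ctot>0$ a.e., the integrand is nonnegative and therefore vanishes a.e., which is exactly $c_1/\ctot\ge m$ a.e.\ in $\Omega$.

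There is no substantive obstacle: the argument is a one-line application of the inequality once the test function is in hand. The only (mild) technical point is the simultaneous arrangement of $C^2$ regularity, convexity, and both vanishing conditions of \eqref{1.f} at $z=c_1^\Gamma/\ctot^\Gamma$. The positive-part-cubed construction handles all three requirements in one stroke; the quadratic $((m-z)_+)^2$ would fail to be $C^2$, and any smooth strictly convex function would fail to vanish on $\{z\ge m\}$ as needed to extract the pointwise bound from an integral identity.
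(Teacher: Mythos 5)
Your proposal is correct and follows exactly the paper's own argument: the same test function $f(z)=(m-z)_+^3$, the same verification of $C^2$ regularity, convexity, and the vanishing conditions \eqref{1.f} via $m<c_1^\Gamma/\ctot^\Gamma$, and the same conclusion from the vanishing right-hand side together with $\ctot>0$. No differences worth noting.
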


\begin{proof}
Let $f(x) = (m-x)_{+}^{3}$ for $0\leq x\leq 1$. 
Clearly $f\in C^2(0,1)$ satisfies \eqref{1.f}.
Taking into account the assumptions of the lemma, 
we deduce that \eqref{1.ii2} holds for
the above choice of $f$. Since $c_1^0/\ctot^0\ge m$, the right-hand side
vanishes. Because of the nonnegativity of $f$ and the positivity of 
$c_i$, we infer that $0=f(c_1/\ctot)=(m-c_1/\ctot)_+^3$ in $\Omega$ and
$c_1/\ctot\ge m$ in $\Omega$, concluding the proof.
\end{proof}


\section{Proof of Theorem \ref{thm3}}\label{sec.thm3}

\subsection{Derivation of the evolution equation for $p$}

We multiply \eqref{1.c} by $\pa p/\pa c_i$, sum over $i=1,\ldots,n$, and
compute in the sense of distributions:
\begin{equation}\label{7.p}
  \pa_t p = \sum_{i=1}^n \frac{\pa p}{\pa c_i}\diver(c_i\na p)
	= \sum_{i=1}^n \frac{\pa p}{\pa c_i}(\na c_i\cdot\na p+c_i\Delta p)
	= |\na p|^2 + \widetilde{D}\Delta p,
\end{equation}
where $\widetilde{D}=\sum_{i=1}^n (\pa p/\pa c_i)c_i$. 
Because of the Gibbs-Duhem relation
\eqref{m.gde}, it follows that
$\pa p/\pa c_i=\sum_{j=1}^n c_j\pa^2\en/\pa c_i\pa c_j$, and consequently,
$$
  \widetilde{D} = \sum_{i,j=1}^n c_ic_j\frac{\pa^2\en}{\pa c_i\pa c_j}.
$$

We claim that $\widetilde{D}\ge p$. Indeed, definition \eqref{2.H} leads to
$$
  \frac{\pa^2\en}{\pa c_i\pa c_j}
	= (b_i+b_j)\sigma + b_ib_j\ctot\sigma^2 + \frac{\delta_{ij}}{c_i} - a_{ij},
	\quad \sigma = \frac{1}{1-\sum_{i=1}^n b_ic_i}\ge 1.
$$
Then
\begin{align*}
  \widetilde{D} &= 2\ctot\sigma\sum_{i=1}^n b_ic_i
	+ \ctot\sigma^2 \bigg(\sum_{i=1}^n b_ic_i\bigg)^2 + \ctot
	- \sum_{i,j=1}^n a_{ij}c_ic_j \\
	&= \ctot\bigg(1 + \sigma\sum_{i=1}^n b_ic_i\bigg)^2 - \sum_{i,j=1}^n a_{ij}c_ic_j
	= \frac{\ctot}{(1-\sum_{i=1}^n b_ic_i)^2} - \sum_{i,j=1}^n a_{ij}c_ic_j \ge p.
\end{align*}

\subsection{Lower bound for the pressure}

We show that $p\ge m$ in $\Omega$, $t>0$, where 
$m=\min\{\min_\Omega p(c^0),p^\Gamma\}>0$. Then equation \eqref{7.p} is
uniformly parabolic. Using $(p-m)_-=\min\{0,p-m\}$ as a test function in 
\eqref{7.p} and integrating by parts gives
\begin{align*}
  \frac12 \frac{d}{dt}\int_\Omega(p-m)_-^2 dx
	&= -\int_\Omega\big(\widetilde{D}-(p-m)_-\big)|\na(p-m)_|^2 dx \\
	&\phantom{xx}{}- \int_\Omega(p-m)_-\na \widetilde{D}\cdot\na(p-m)_- dx.
\end{align*}
Since $\widetilde{D}\ge p$, it follows that 
$\widetilde{D}-(p-m)_-\ge D-(p-m)\ge m$. Thus, together
with Young's inequality, we find that
\begin{equation}\label{mah}
  \frac12 \frac{d}{dt}\int_\Omega(p-m)_-^2 dx
	\le -\frac{m}{2}\int_\Omega|\na(p-m)_-|^2
	+ \frac{1}{2m}\int_\Omega |\na \widetilde{D}|^2(p-m)_-^2 dx.
\end{equation}
The second term on the right-hand side can be bounded by means of the Cauchy-Schwarz,
Gagliardo-Nirenberg (with constant $C_{GN}>0$, using $d=2$), and Young inequalities:
\begin{align*}
  \int_\Omega & |\na \widetilde{D}|^2 (p-m)_-^2 dx 
	\leq \|\na \widetilde{D}\|_{L^4(\Omega)}^2\|(p-m)_-\|_{L^4(\Omega)}^2\\
  &\leq C_{GN} \|\na \widetilde{D}\|_{L^4(\Omega)}^2 
	\|(p-m)_-\|_{L^2(\Omega)}\|(p-m)_-\|_{H^1(\Omega)}\\
  &\leq \frac{m^2}{2}\int_\Omega |\na(p-m)_-|^2 dx 
  + \left( \frac{C_{GN}^2}{2 m^2} \|\na \widetilde{D}\|_{L^4(\Omega)}^4 
	+ \frac{m^2}{2}\right)\int_\Omega(p-m)_-^2 dx.
\end{align*}
So \eqref{mah} implies that
\begin{align*}
  \frac{d}{dt}\int_\Omega(p-m)_-^2 dx
  \le -\frac{m}{2}\int_\Omega|\na(p-m)_-|^2
  + \left( \frac{C_{GN}^2}{2 m^3} \|\na \widetilde{D}\|_{L^4(\Omega)}^4 
	+ \frac{m}{2}\right)\int_\Omega (p-m)_-^2 dx.
\end{align*}
In view of our regularity assumptions on $\na c_i$, we have
$\na\widetilde{D}\|_{L^4(\Omega)}^4\in L^1_{\rm loc}(0,\infty)$,
and we conclude with Gronwall's lemma that $(p-m)_-=0$,
i.e.\ $p\ge m$ in $\Omega$, $t>0$.

\subsection{Gradient estimate for the pressure}

We multiply \eqref{7.p} with $\Delta p$ and use the lower bound 
$\widetilde{D}\ge p\ge m$
and the Gagliardo-Nirenberg inequality with $d=2$:
\begin{align}
  \frac12\frac{d}{dt} & \int_\Omega|\na p|^2 dx + m\|\Delta p\|_{L^2(\Omega)}^2
	\le \int_\Omega |\na p|^2\Delta p dx 
	\le \|\na p\|_{L^4(\Omega)}^2\|\Delta p\|_{L^2(\Omega)} \nonumber \\
	&\le C_{GN}^2\|\na p\|_{H^1(\Omega)}^{1/2}\|\na p\|_{L^2(\Omega)}^{1/2}
	\|\Delta p\|_{L^2(\Omega)} \nonumber \\
	&= C_{GN}^2\big(\|\na^2 p\|_{L^2(\Omega)}^2 + \|\na p\|_{L^2(\Omega)}^2\big)^{1/2}
	\|\na p\|_{L^2(\Omega)}\|\Delta p\|_{L^2(\Omega)}. \label{7.aux}
\end{align}
We claim that $\|\na p\|_{L^2(\Omega)}\le C_0\|\Delta p\|_{L^2(\Omega)}$
for some constant $C_0>0$ which only depends on $\Omega$ and $d$. 
Because of $p=p^\Gamma=\mbox{const.}$ on $\pa\Omega$, we have
$\int_\Omega \na pdx=\int_{\pa\Omega}p\nu ds = p^\Gamma\int_{\pa\Omega}\nu dx=0$,
which implies that
$$
  \|\na p\|_{L^2(\Omega)} = \bigg\|\na p - \frac{1}{\mbox{meas}(\Omega)}
	\int_\Omega \na p dx\bigg\|_{L^2(\Omega)}
	\le C_P\|\na^2 p\|_{L^2(\Omega)},
$$
where $C_P>0$ is the Poincar\'e constant.
The function $v:=p-p^\Gamma$ satisfies $\Delta v=f:=\Delta p$ in $\Omega$
and $v=0$ on $\pa\Omega$. By elliptic regularity,
\begin{equation}\label{7.ell}
  \|\na^2 p\|_{L^2(\Omega)} \le \|v\|_{H^2(\Omega)} \le C_E\|f\|_{L^2(\Omega)}
	= C_E\|\Delta p\|_{L^2(\Omega)}
\end{equation}
for some constant $C_E>0$, and therefore,
\begin{equation}\label{7.poin}
  \|\na p\|_{L^2(\Omega)} \le C_PC_E\|\Delta p\|_{L^2(\Omega)}.
\end{equation}

We infer from \eqref{7.ell} and \eqref{7.poin} that \eqref{7.aux} becomes
$$
  \frac12\frac{d}{dt} \int_\Omega|\na p|^2 dx + m\|\Delta p\|_{L^2(\Omega)}^2
	\le C_{GN}^2C_E(1+C_P^2)^{1/2}\|\Delta p\|_{L^2(\Omega)}
	\|\na p\|_{L^2(\Omega)}\|\Delta p\|_{L^2(\Omega)},
$$
and hence,
\begin{equation}\label{7.aux2}
  \frac{d}{dt}\int_\Omega|\na p|^2 dx 
	+ 2\big(m-C_1\|\na p\|_{L^2(\Omega)}\big)
	\|\Delta p\|_{L^2(\Omega)}^2 \le 0,
\end{equation}
where $C_1=C_{GN}^2 C_E(1+C_P)^{1/2}$. Let $0<K_0<1/C_1$. Then,
by assumption, $\lambda:=m-C_1\|\na p(c^0)\|_{L^2(\Omega)}>0$.
Since $|\na p|\in C^0([0,\infty);L^2(\Omega))$, the coefficient remains positive
in a small time interval $[0,t^*)$. As a consequence, 
$t\mapsto\|\na p(c(t))\|_{L^2(\Omega)}^2$ is nonincreasing in $[0,t^*)$.
A standard prolongation argument then implies that 
$m-C_1\|\na p(c(t))\|_{L^2(\Omega)}>0$ and
$t\mapsto \|\na p(c(t))\|_{L^2(\Omega)}^2$ is nonincreasing for all $t>0$.
In particular,
$$
  m-C_1\|\na p(c(t))\|_{L^2(\Omega)} \ge \lambda.
$$
{}From this fact and estimates \eqref{7.aux2} and \eqref{7.poin}, we deduce that
$$
  \frac{d}{dt}\int_\Omega|\na p|^2 dx 
	\le -2\lambda\|\Delta p\|_{L^2(\Omega)}^2
	\le -2\lambda (C_PC_E)^{-2}\|\na p\|_{L^2(\Omega)}^2,
$$
and Gronwall's lemma allows us to conclude.


\section{Numerical experiments}\label{sec.num}

We solve system \eqref{1.c}-\eqref{1.p} numerically in one space dimension
for the case $\eps=0$ and $n=2$, imposing Dirichlet and homogeneous Neumann
boundary conditions for $p$. 
Let $\{t_k:k\ge 0\}$ with $t_0=0$ be a discretization of the time 
interval $[0,\infty)$ and $\{x_j:0\le j\le N\}$ with $N\in\N$, $x_j=jh$, and $h=1/N$, 
be a uniform discretization of the space interval $\Omega=(0,1)$. 
We set $\tau_k=t_k-t_{k-1}$ for $k\ge 1$. 
For the discretization of \eqref{1.c}, we
distinguish between the two boundary conditions.

\subsection{Homogeneous Neumann boundary conditions}

We employ the staggered grid $y_j=x_{j-1/2}=(x_j+x_{j-1})/2$ and
denote by $c_{i,j}^k$ and $p_j^k$ the approximations of $c_i(y_j,t_k)$ and
$p(y_j,t_k)$, respectively. The values at the interior points are the unknowns of 
the problem, while the values at the boundary points are determined according to
$$
  c_{i,0}^k = c_{i,1}^k, \quad c_{i,N+1}^k = c_{i,N}^k, \quad
	k\ge 0,\ i=1,2.
$$
The initial condition is discretized by 
$$
  c_{i,j}^0 = \frac12\big(c_i^0(x_j)+c_i^0(x_{j-1})\big), \quad
	j=1,\ldots,N,\ i=1,2.
$$
Approximating the time derivative by the implicit Euler scheme and the diffusion
flux $J_i := -c_i\pa_x p$ at $(y_{j+1/2}, t_k)$ by the implicit upwind scheme
\begin{equation}\label{Upwind}
  J_{i,j+1/2}^{k} = c_{i,j}^k\max\{v_{j+1/2}^k, 0\} 
	+ c_{i,j+1}^k\min\{v_{j+1/2}^k, 0\},\quad v_{j+1/2}^k = -\frac{p_{j+1}-p_j}{h},
\end{equation}
the finite-difference scheme for \eqref{1.c} becomes
\begin{equation}
  \frac{1}{\tau_k}\big(c_{i,j}^{k}-c_{i,j}^{k-1}\big) 
	+ \frac{1}{h}\big(J_{i,j+1/2}^{k}-J_{i,j-1/2}^{k}\big) = 0, \label{6.neumann}
\end{equation}
where $1\le j\le N$, $k\ge 1$, $i=1,2$. To be consistent with the
boundary conditions, we define $p_0^k=p_1^k$ and $p_{N+1}^k=p_N^k$, $k\ge 0$.

\subsection{Dirichlet boundary conditions}

Here, we do not need to employ the staggered grid, so we use the original 
grid $\{x_j :  0\leq j\leq N\}$.
The implicit scheme \eqref{Upwind}-\eqref{6.neumann} works also in this situation, 
with the only difference that the boundary conditions are simply given by
$c_{i,0}^k=c_i(0,t_k)$, $c_{i,N}^k=c_i(1,t_k)$, and the initial condition
is defined by $c_{i,j}^0=c_i^0(x_j)$.

\subsection{Iteration procedure}

The nonlinear equations are solved by using the {\sc Matlab}
function {\tt fsolve}, with $c_{i,j}^{k-1}$ as the initial guess.
The time step $\tau_k$ is chosen in an adaptive way. At each time iteration,
once the new iterate $c_{i,j}^k$ is computed, the relative difference
between two consecutive iterates,
$$
  \rho^k = \sqrt{\frac{\sum_{i=1}^2\sum_{j=1}^N
	|c_{i,j}^k-c_{i,j}^{k-1}|^2}{\sum_{i=1}^2\sum_{j=1}^N|c_{i,j}^{k-1}|^2}},
$$
is evaluated and compared to the maximal tolerance $\rm{tol}_M$. If 
$\rho^k\ge \rm{tol}_M$, the iterate is rejected, the time step $\tau_k$ is
halved, and the step is repeated. Otherwise, the iterate is accepted.
Before the next iterate is computed, $\rho^k$ is compared to the minimal
tolerance $\rm{tol}_m$ (with $\rm{tol}_m<\rm{tol}_M$). If $\rho^k<\rm{tol}_m$,
the time step is increased by a factor $5/4$. Otherwise, $\tau_k$ is kept
unchanged. In the simulations, we have chosen the values
$\rm{tol}_m=4\cdot 10^{-4}$, $\rm{tol}_M=6\cdot 10^{-4}$, and $N=201$.

\subsection{Numerical results}

We present the results of four numerical simulations, referring to the different
boundary conditions and different choices of the parameters, namely
$$
  b_1 = 1, \quad b_2 = \frac12, \quad a_{11} = \eta, \quad a_{12} = \eta,
	\quad a_{22} = \frac32\eta,
$$
where $\eta=\eta_{m}:= 10^{-3}$ and $\eta=\eta_{M}:= 1.185186593672589$, 
which corresponds to a lower bound on the Hessian of the 
free energy \eqref{2.H} approximately equal to $10^{-6}$.
In all cases, the initial data have the form
$$ c_{1}^{in}(x) = c_{1,A} + (c_{1,B}-c_{1,A})x^{10},\quad
c_{2}^{in}(x) = c_{2,A} + (c_{2,B}-c_{2,A})x^{1/10}, $$
which describes an accumulation of $c_{1}$, $c_{2}$ close to $x=1$, $x=0$,
respectively. The parameters $c_{i,A}$, $c_{i,B}$, $i=1,2$, 
are chosen in such a way that
$p(c_{1,A},c_{2,A})=p(c_{1,B},c_{2,B})=1$, which is necessary 
in order to have convergence to a steady state in the case of Dirichlet 
boundary conditions, since any steady state
is characterized by the pressure assuming a constant value.  

For homogeneous Neumann boundary conditions and $\eta=\eta_{m}$ (Case I),
Figure \ref{fig1} shows the evolution of the
mass densities $c_1$, $c_2$ and the pressure $p$ at the time instants
$t=0,5\cdot 10^{-3},50\cdot 10^{-3},1$ (the solution at $t=1$ represents the
steady state) as well as the relative free energy $\en(c(t))-\en(c^0)$.
As expected, the pressure converges to a constant function for ``large'' times. The
stationary mass densities are nonconstant. 
The Neumann boundary condition
is numerically satisfied, but we observe a boundary layer at $x=0$,
originating from the ``constraint'' of constant pressure. The relative free energy
decays exponential fast. After $t\approx 0.7$, the stationary state is
almost reached and the values of the free energy are of the order to the
numerical precision.

\begin{figure}[ht]
\hspace*{-10mm}\includegraphics[width=18cm]{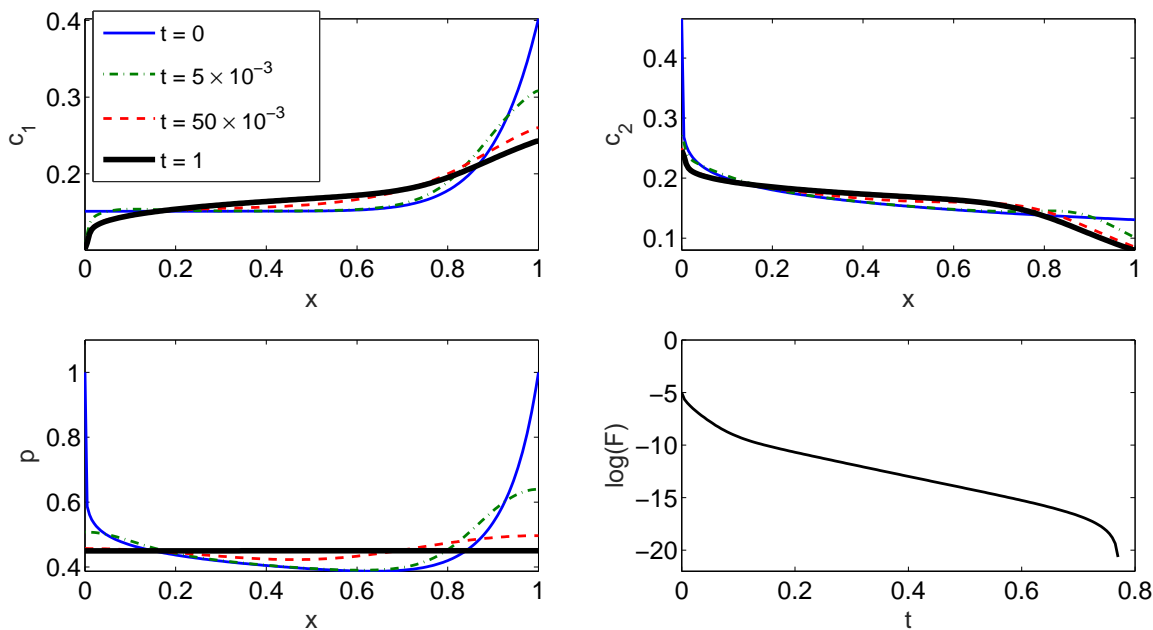}
 \caption{Case I (Neumann conditions, $\eta=\eta_{m}$): evolution of the
mass densities $c_1$, $c_2$, pressure $p$, and logarithm of the
relative free energy $\en(c(t))-\en(c^0)$.}
\label{fig1}
\end{figure}

In Figure \ref{fig2}, we present the results for $\eta=\eta_{M}$
(Case II), still with homogeneous Neumann boundary conditions. 
We observe that the relative free energy decay is slightly
slower than in Case I but still exponential fast.

\begin{figure}[ht]
\hspace*{-10mm}\includegraphics[width=18cm]{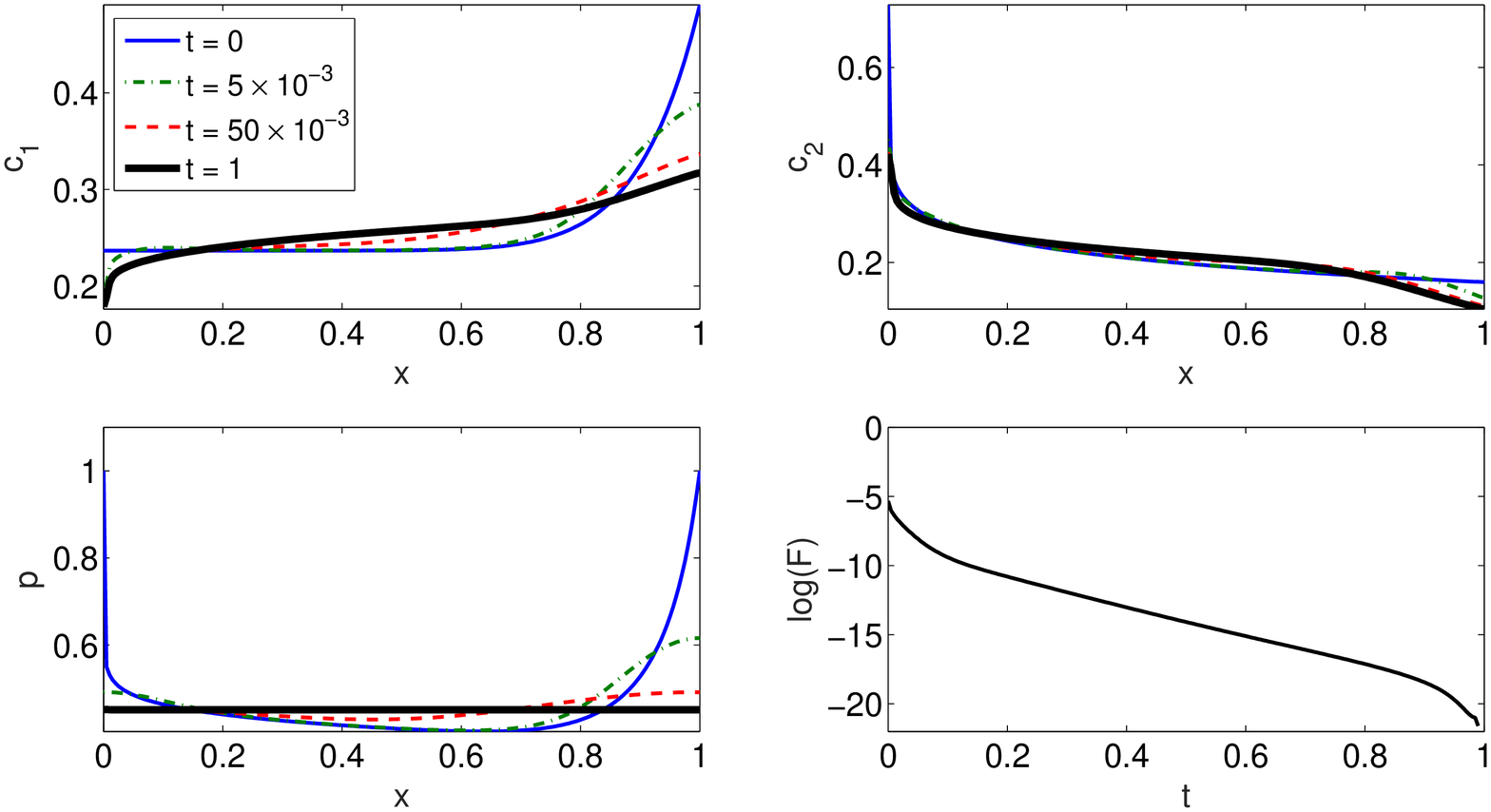}
 \caption{Case II (Neumann conditions, $\eta=\eta_{M}$): evolution of the
mass densities $c_1$, $c_2$, pressure $p$, and logarithm of the
relative free energy $\en(c(t))-\en(c^0)$.}
\label{fig2}
\end{figure}

For the case of Dirichlet boundary conditions, an additional term has to be 
added to the free energy in order to have free energy decay, due to the presence 
of additional boundary contributions in the free energy balance equation. 
More precisely, we choose the modified free energy 
$\widetilde\en(c):=\en(c)-(\alpha_1 c_1+\alpha_2 c_2)$, where 
$\alpha_1$, $\alpha_2\in\R$ are such that the boundary term in
\begin{align*}
  \frac{d}{dt}\int_\Omega\widetilde\en(c)dx
	&= \int_\Omega\sum_{i=1}^2\frac{\pa\en}{\pa c_i}\diver(c_i\na p) dx
	- \int_\Omega\sum_{j=1}^2\alpha_j\diver(c_j\na p) dx \\
	&= -\int_\Omega|\na p|^2 dx 
	+ \int_{\pa\Omega}\bigg(\sum_{i=1}^2c_i\mu_i 
	- \sum_{j=1}^2 \alpha_j c_j\bigg)\na p\cdot\nu ds
\end{align*}
vanishes. Here, we have used the relations $\pa\en/\pa c_i=\mu_i$ and
$\sum_{i=1}^2 c_i\na\mu_i=\na p$ (see \eqref{2.nap}). The boundary term 
vanishes if $(\alpha_1,\alpha_2)$ solves the linear system
$$
  \begin{pmatrix}	c_1^L & c_2^L \\ c_1^R & c_2^R 	\end{pmatrix}
	\begin{pmatrix}	\alpha_1 \\ \alpha_2	\end{pmatrix}
	= \begin{pmatrix} 
	c_1^L\mu_1^L + c_2^L\mu_2^L \\ c_1^R\mu_1^R + c_2^R\mu_2^R \end{pmatrix},
$$
where $c_i^L$, $c_i^R$ are the values of $c_i$ at $x=0$, $x=1$, respectively,
and $\mu_i^{L/R} = \mu_i(c_1^{L/R},c_2^{L/R})$ for $i=1,2$. 
If $c_1^L/c_2^L\neq c_1^R/c_2^R$, the above linear system is uniquely solvable.
We remark that the modified free energy $\widetilde\en$ does not change
the energy dissipation $\int_\Omega|\na p|^2 dx$ but it is nontrivial, as
$\int_\Omega(\alpha_1c_1+\alpha_2c_2)dx$ is nonconstant in time.

Figures \ref{fig3} and \ref{fig4} illustrate the evolution of 
$c_1$, $c_2$, $p$, and of the modified relative free energy
$\widetilde\en(c(t))-\widetilde\en(c^0)$. Again, the mass densities
at $t=1$ (they are basically stationary)
are nonconstant, and the modified relative free energy converges exponentially
fast. The decay rate is faster for $\eta=\eta_{M}$, contrarily to what happens in
the case of Neumann boundary conditions.

\begin{figure}[ht]
\hspace*{-10mm}\includegraphics[width=18cm]{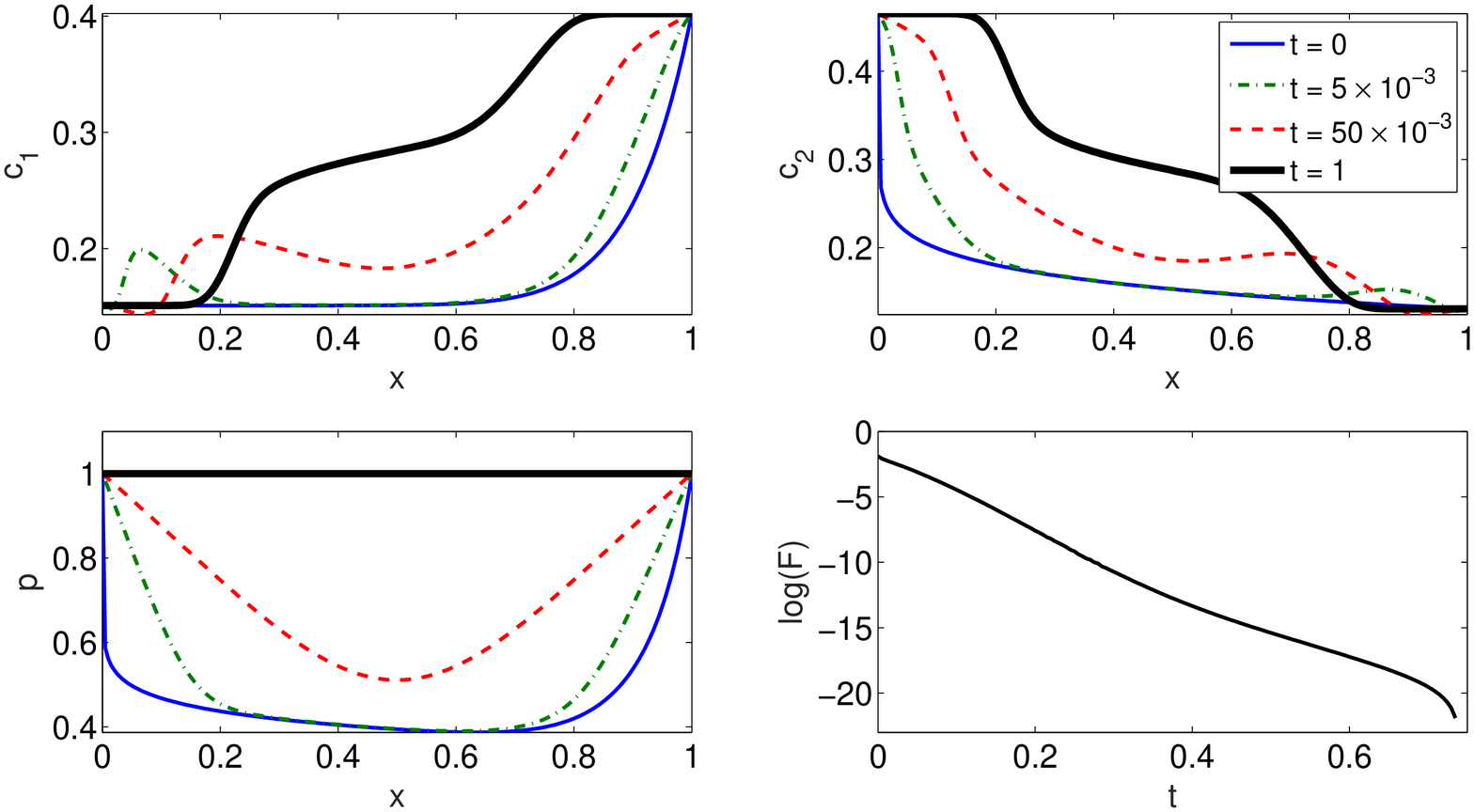}
 \caption{Case III (Dirichlet conditions, $\eta=\eta_{m}$): evolution of the
mass densities $c_1$, $c_2$, pressure $p$, and logarithm of the
relative modified free energy $\widetilde\en(c(t))-\widetilde\en(c^0)$.}
\label{fig3}
\end{figure}

\begin{figure}[ht]
\hspace*{-10mm}\includegraphics[width=18cm]{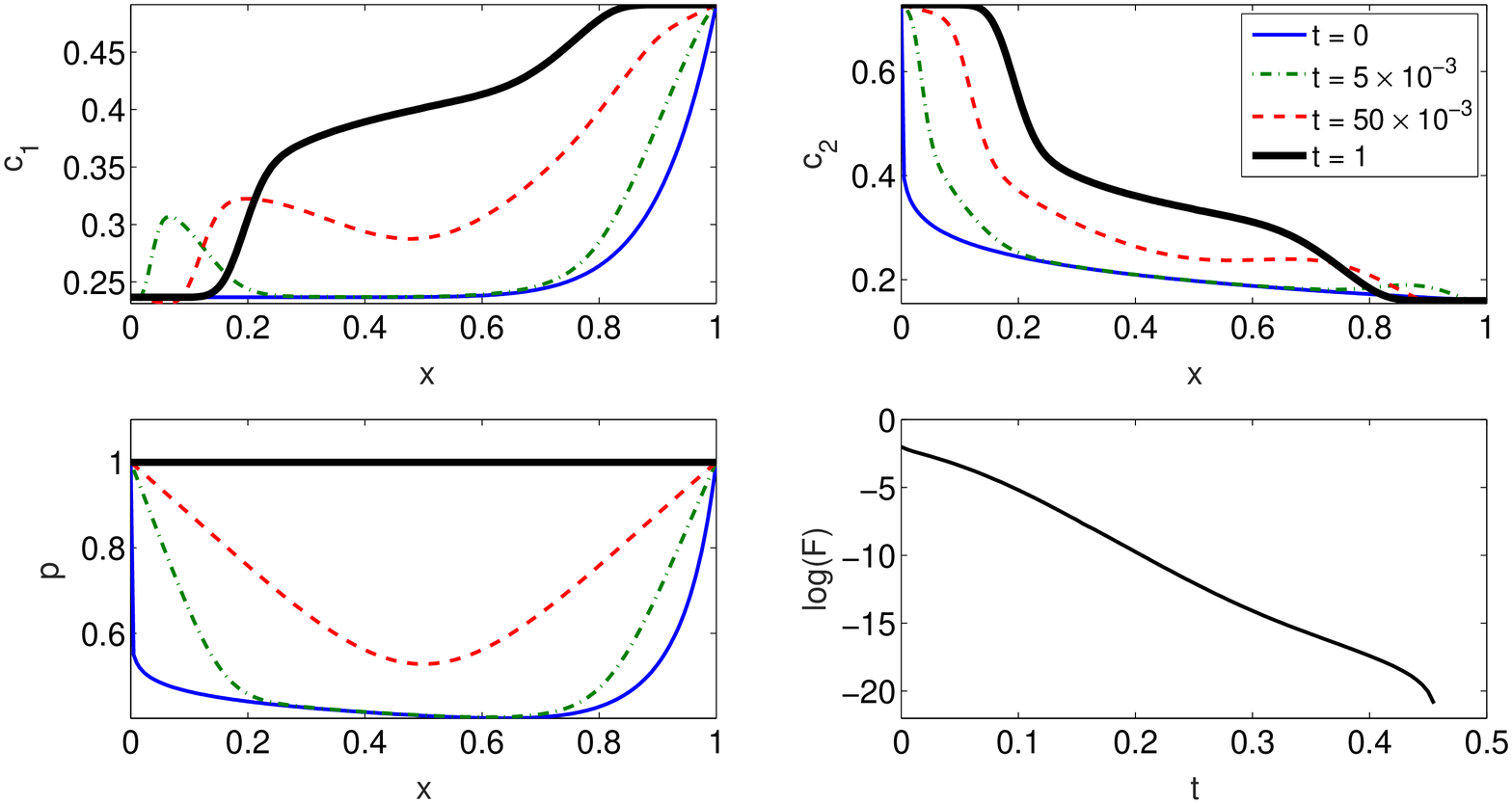}
 \caption{Case IV (Dirichlet conditions, $\eta=\eta_{M}$): evolution of the
mass densities $c_1$, $c_2$, pressure $p$, and logarithm of the
relative modified free energy $\widetilde\en(c(t))-\widetilde\en(c^0)$.}
\label{fig4}
\end{figure}


\begin{appendix}
\section{Formal proof of \eqref{1.ii}}

We prove the integral identity \eqref{1.ii} in a formal setting. 
We proceed as in the proof of Theorem \ref{coro}. Let
$\psi(c)=\ctot f(c_1/\ctot,\ldots,c_{n-1}/\ctot)$ for $c\in\dom$, and 
let $z_{i}=c_{i}/\ctot$.
Since $\eps=0$, the statement follows if 
$\sum_{j=1}^n c_j\pa^2\psi/\pa c_i\pa c_j=0$ for
$i=1,\ldots,n$. A straightforward computation gives
\begin{align*}
  \frac{\pa^2\psi}{\pa c_i\pa c_j} 
	&= \sum_{k=1}^{n-1}\frac{\pa f}{\pa z_{k}}\left( \frac{\pa z_{k}}{\pa c_{i}} 
	+ \frac{\pa z_{k}}{\pa c_{j}} + \ctot\frac{\pa^{2}z_{k}}{\pa c_{i}\pa c_{j}} \right)
  + \ctot\sum_{k,s=1}^{n-1}\frac{\pa^{2} f}{\pa z_{k}\pa z_{s}}
	\frac{\pa z_{k}}{\pa c_{i}}\frac{\pa z_{s}}{\pa c_{j}}.
\end{align*}
Since $\pa z_{k}/\pa c_{i} = \delta_{ik}/\ctot - c_{k}/\ctot^{2}$, it follows that 
$$
  \sum_{i=1}^{n}c_{i}\frac{\pa z_{k}}{\pa c_{i}} = 0,\quad k=1,\ldots,n-1.
$$
Moreover,
$$ 
  \frac{\pa z_{k}}{\pa c_{i}} + \frac{\pa z_{k}}{\pa c_{j}} 
	+ \ctot\frac{\pa^{2}z_{k}}{\pa c_{i}\pa c_{j}} = 0, \quad k=1,\ldots,n-1,\ 
	i,j=1,\ldots,n. 
$$
Putting these three identities together yields
$\sum_{j=1}^nc_j\pa^2\psi/\pa c_i\pa c_j=0$ for $i=1,\ldots,n$.
\end{appendix}


\end{document}